\numberwithin{equation}{section}
\renewcommand{\thetheoremName}
\newcommand{\IC}{\mathbb{C}}
\newcommand{\IF}{\mathbb{F}}
\newcommand{\IN}{\mathbb{N}}
\newcommand{\IG}{\mathbb{G}}
\newcommand{\IQ}{\mathbb{Q}}
\newcommand{\IZ}{\mathbb{Z}}
\newcommand{\calF}{\mathcal{F}}
\newcommand{\calG}{\mathcal{G}}
\newcommand{\calO}{\mathcal{O}}
\newcommand{\calS}{\mathcal{S}}
\newcommand{\im}{\mathfrak{m}}
\def\Hom{\mathrm{Hom}}
\def\End{\mathrm{End}}
\def\Gal{\mathrm{Gal}}
\def\Sp{\mathrm{Sp}}
\def\Spf{\mathrm{Spf}}
\DeclareMathOperator\Stab{Stab}
\DeclareMathOperator\Ab{Ab}
\DeclareMathOperator\Nil{Nil}
\DeclareMathOperator\Grp{Grp}
\DeclareMathOperator\Cart{Cart}
\DeclareMathOperator\Fil{Fil}
\DeclareMathOperator\Preper{Preper}
\newtheorem{theorem}{Theorem}[section]
\newtheorem*{theorem*}{Theorem}
\newtheorem{question}[theorem]{Question}
\newtheorem{lemma}[theorem]{Lemma}
\newtheorem{prop}[theorem]{Proposition}
\newtheorem{corollary}[theorem]{Corollary}
\newtheorem{conj}{Conjecture}
\theoremstyle{definition}
\theoremstyle{remark}
\newtheorem{remark}[theorem]{Remark}
\begin{document}
\title{On $p$-adic versions of the Manin--Mumford Conjecture}
\author{Vlad Serban}

\thanks{This research was partially supported by the Fields Institute for Research in Mathematical
Sciences (\href{www.fields.utoronto.ca}{www.fields.utoronto.ca}) and START-prize Y-966 of the Austrian Science Fund (FWF) under P.I. Harald Grobner}

\address{Vlad Serban, Department of Mathematics, EPFL, 1015 Lausanne, Switzerland}
\email{vlad.serban@epfl.ch}

\subjclass[2010]{11S31, 14L05, 11G10, 13F25.}
\bibliographystyle{alpha}

\begin{abstract}
We prove $p$-adic versions of a classical result in arithmetic geometry stating that an irreducible subvariety of an abelian variety with dense torsion has to be the translate of a subgroup by a torsion point. We do so in the context of certain rigid analytic spaces and formal groups over a $p$-adic field $K$ or its ring of integers $R$, respectively. In particular, we show that the rigidity results for algebraic functions underlying the so-called Manin-Mumford Conjecture generalize to suitable $p$-adic analytic functions. In the formal setting, this approach leads us to uncover purely $p$-adic Manin-Mumford type results for formal groups not coming from abelian schemes. Moreover, we observe that a version of the Tate-Voloch Conjecture holds in the $p$-adic setting: torsion points either lie squarely on a subscheme or are uniformly bounded away from it in the $p$-adic distance. 
\end{abstract}

\maketitle

\section{Introduction} 
The Manin--Mumford Conjecture concerns the intersection of the torsion points on a semi-abelian variety $G$ with a subvariety and characterizes the situations where said intersection is exceptionally large. More precisely, it states that the torsion points are dense on an irreducible subvariety $V\subset G$ if and only if $V$ is a \emph{special} subvariety of $G$: the translate by a torsion point of an algebraic subgroup. In its classical form, when $G$ is an abelian variety, this was established by M. Raynaud \cite{MR688265,MR717600}, whereas the case when $G$ is an algebraic torus already appears in work of S. Lang \cite{MR0190146}. There have been many subsequent proofs and generalizations; we refer the reader to surveys such as U. Zannier's book \cite{MR2918151} for more on this active research topic of so-called \emph{unlikely intersections}. \par

In this article, we examine how in a $p$-adic setting some of these statements may be strengthened. More precisely, assume for simplicity $G=\IG_m^n$ is an algebraic torus and $V$ an embedded irreducible affine curve. The conjecture then amounts to showing that if polynomials in the affine coordinate ring of $V$ vanish at infinitely many $n$-tuples of roots of unity $(\zeta_1,\ldots,\zeta_n)\in\IG_m^{n,tor}$, then $V$ must be the translate of a subtorus by a torsion point.\par
Let now $R$ be a complete, discretely valued subring of $\calO_{\IC_p}$ for some prime $p$ and $K:=R[1/p]$. One may then ask if a similar statement holds for power series $f\in K[[X_1,\ldots,X_n]]$ vanishing at infinitely many $n$-tuples of roots of unity, however this runs into obvious counterexamples and convergence issues. We present two ways to remedy this situation.\par
Firstly, one may consider formal power series with integral coefficients $f\in R[[X_1,\ldots,X_n]]$ and restrict to torsion points lying on the interior of the $p$-adic unit disk centered at the identity, which have $p$-power order. This fixes convergence issues and in this framework the author showed in \cite{Serban:2016aa} that an irreducible closed formal subscheme of the formal torus $\widehat{\IG}_m^n$ containing a dense set of torsion points indeed has to be the translate of a formal subtorus by a torsion point. A similar result holds more generally after replacing $\widehat{\IG}_m$ by a one-dimensional Lubin--Tate formal group (\cite[Theorem 3.7.]{Serban:2016aa}). \par
More generally, one may consider an $n$-dimensional formal Lie group $\calF$ over $R$ and ask if a similar statement holds. Let $d(X,-)$ denote a suitable $p$-adic distance to a (formal) scheme $X$. We prove the following result, which may be viewed as a $p$-adic formal version of the classical Manin--Mumford Conjecture, since an $n$-dimensional abelian variety over $R$ gives rise to an $n$-dimensional $p$-divisible formal Lie group by completion along the identity section: 
\begin{theorem}\label{maintheoremformal}
Let $\calF:=\Spf(R[[X_1,\ldots, X_n]])$ denote an $n$-dimensional $p$-divisible formal Lie group over $R$ and let $X\hookrightarrow \calF$ be a closed formal subscheme over $R$. Then exactly one of the following occurs:
\begin{enumerate}
\item There exists $\varepsilon>0$ such that there are only finitely many torsion points $Q\in\calF[p^\infty]$ with $d(X,Q)<\varepsilon$.
\item There are infinitely many torsion points of $\calF$ on $X(\IC_p)$ and $X$ contains (possibly after passing to a finite extension of $R$) the translate by a torsion point of a positive dimensional formal subgroup. 
\end{enumerate}
\end{theorem}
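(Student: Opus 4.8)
The plan is to reduce the statement to a rigidity property of $p$-adic analytic functions along the lines of the author's earlier work, and then to upgrade the resulting dichotomy into the uniform (Tate--Voloch style) form by a compactness argument. I will work throughout with the formal group law giving $\calF$ coordinates $X_1,\dots,X_n$ and with the torsion filtration $\calF[p^m]$, noting that since $\calF$ is $p$-divisible each $\calF[p^m]$ is a finite flat group scheme and the union $\calF[p^\infty]$ consists of points lying in the open unit polydisk over $\calO_{\IC_p}$.

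First I would reformulate (2)'s negation: if $X$ contains no positive-dimensional translated formal subgroup (even after base change), I want to deduce (1). To get at this, I would invoke the key rigidity input — the analogue of the Manin--Mumford statement for formal tori / Lubin--Tate groups from \cite{Serban:2016aa}, or more precisely the mechanism behind it. The strategy is that the ideal $I\subset R[[X_1,\dots,X_n]]$ cutting out $X$, if it vanishes on a Zariski-dense set of $p$-power torsion points, must be stable under a nontrivial family of the formal-group translation operators; concretely, one studies the action of $[1+p^k]$ or of the formal logarithm and shows that a power series vanishing at too many torsion points is forced to factor through a proper formal subgroup. The cleanest route is via the formal logarithm $\log_\calF\colon \calF \iso \widehat{\IG}_a^n$ over $K$, which is an isomorphism of rigid analytic open polydisks and which carries $\calF[p^\infty]$ into the $\IQ_p$-points (the torsion lattice) — here one must be careful that $\log_\calF$ need not be integral, so this linearization only sees torsion of sufficiently large $p$-power order, which is harmless since we may discard finitely many torsion points. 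After linearizing, $X$ becomes a closed analytic subvariety of a polydisk containing a dense subset of a $\IQ_p$-lattice, and the classical argument (a $p$-adic Lindemann/Skolem-type statement, or the theorem of \cite{Serban:2016aa} transported across $\log_\calF$) forces it to contain a translate of a $\IQ_p$-subspace, i.e. $X$ contains a translated formal subgroup.

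Next, to obtain the uniform bound in alternative (1) rather than merely "finitely many", I would argue by contradiction using the compactness of the torsion: suppose there were torsion points $Q_j \in \calF[p^\infty]$ with $d(X,Q_j)\to 0$ but not eventually on $X$. The $Q_j$ lie in the compact polydisk $\calO_{\IC_p}^n$; pass to a convergent subsequence with limit $Q_\infty$, which then lies on $X$ since $X$ is closed. Now one uses that the $Q_j$ are torsion and that the torsion subgroup is \emph{discrete} in a suitable sense after fixing the order of vanishing — more precisely, I would stratify by the exact $p$-power order $p^{m_j}$ of $Q_j$: if the $m_j$ stay bounded there are only finitely many $Q_j$ and we are done, so $m_j\to\infty$, and then the accumulation forces $X$ to contain infinitely many torsion points (the $Q_j$ are "almost on" $X$, and a Tate--Voloch/Artin--approximation argument moves them onto $X$ while remaining torsion, or else one directly derives that the tangent cone of $X$ at $Q_\infty$ contains a torsion-rich formal subgroup). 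This puts us back in case (2), contradicting our assumption, and establishes exclusivity of the two alternatives simultaneously.

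The main obstacle I anticipate is precisely this last step: controlling the interaction between $p$-adic approximation and the torsion condition, i.e. proving that torsion points which are $\varepsilon$-close to $X$ for arbitrarily small $\varepsilon$ must actually meet $X$ in an infinite set. The naive statement "$d(X,Q)$ small $\Rightarrow$ $Q\in X$" is false pointwise; what saves us is that there are only countably many torsion points of each bounded order and that the defining functions of $X$ have integral (or at least bounded) coefficients, so a torsion point of order $p^m$ that is within $p^{-(C m + c)}$ of $X$ actually lies on $X$ — a clean form of the Tate--Voloch principle. Making this quantitative for a general $p$-divisible formal Lie group, where one does not have the explicit $[p]$-multiplication formula available for $\widehat{\IG}_m$ or Lubin--Tate groups, requires a careful estimate on the growth of the coordinates of the $[p^m]$-multiplication map and its effect on valuations of coordinates of torsion points; I expect this to be the technical heart of the proof, with the rigidity input from \cite{Serban:2016aa} and the linearization by $\log_\calF$ being comparatively formal once set up correctly.
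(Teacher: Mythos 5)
Your proposal has two fundamental gaps that cannot be repaired as stated, and the route you propose is genuinely different from the one the paper takes.

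\textbf{The logarithm kills torsion.} You propose to linearize via $\log_\calF\colon\calF\to\widehat{\IG}_a^n$ and claim this ``carries $\calF[p^\infty]$ into the $\IQ_p$-points (the torsion lattice).'' This is false: $\log_\calF$ is a homomorphism to the additive group, and $(\IC_p,+)$ is torsion-free, so $\log_\calF$ annihilates all of $\calF[p^\infty]$. There is no $p$-adic ``torsion lattice'' in $\widehat{\IG}_a^n$ playing the role of $\frac{1}{N}\Lambda/\Lambda$ in the complex uniformization picture; the torsion of $\calF$ is precisely the kernel of the logarithm on the region of injectivity's complement. So after linearizing you have literally nothing left to work with, and the intended reduction to a statement about dense lattice points on an analytic subvariety never gets off the ground. (It is also worth noting, as the paper itself emphasizes, that the mechanism in \cite{Serban:2016aa} exploits the large endomorphism ring of Lubin--Tate groups and does not transport to an arbitrary $p$-divisible formal Lie group, which is exactly why a new argument was needed here.)

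\textbf{$\calO_{\IC_p}$ is not compact.} Your argument for exclusivity of the two alternatives passes to a convergent subsequence of torsion points $Q_j$ with $d(X,Q_j)\to 0$ ``by compactness of the polydisk $\calO_{\IC_p}^n$.'' But $\IC_p$ is not locally compact, hence $\calO_{\IC_p}^n$ is not compact and bounded sequences need not have convergent subsequences. There is no limit point $Q_\infty$ to extract, and the subsequent tangent-cone/Artin-approximation step has nothing to anchor to. The quantitative Tate--Voloch bound you gesture at (valuations growing linearly in $m$ for order-$p^m$ torsion near $X$) is the conclusion of the theorem, not an ingredient one can assume.

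For comparison, the paper proves the theorem by an induction on $\dim X$ modeled on Boxall's argument: one first treats the case of infinite stabilizer $\Stab(X)$ directly (Lemma \ref{lemma:infinitestab} shows it is a positive-dimensional formal subgroup inside $X$); when $\Stab(X)(\overline K)$ is finite one uses the inertia lemma (Lemma \ref{Boxallformal}) to produce, for any torsion point $\zeta$ of large order, a Galois element $\sigma$ with $\sigma(\zeta)-\zeta\in\calF[p^r]\setminus\calF[p^{r-1}]$, and then Lemma \ref{translateclose} shows that torsion $\varepsilon$-close to $X$ is $\varepsilon$-close to the lower-dimensional $X\cap T_Q X$ for one of finitely many $Q$. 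This finite covering is what replaces your compactness argument and carries the uniform $\varepsilon$-bound through the induction. Your proposal never engages with the stabilizer or the Galois action, which are the load-bearing ideas.
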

 The $p$-primary torsion $\calF[p^\infty]$ exhausts all of the finite order points on the formal group. This general case required a new proof, since one does not have a large ring of endomorphisms at one's disposal as in the Lubin-Tate case considered in \cite{Serban:2016aa}. Rather, we adapt a proof strategy from the classical algebraic situation due to Boxall \cite{MR1345173}. We note that a slightly stronger statement appears plausible, see Question \ref{strongconjectureformal} and the ensuing discussion. \par

Furthermore, we observe genuinely new phenomena in this $p$-adic formal setting: firstly, Theorem \ref{maintheoremformal} exhibits unlikely intersection results for formal groups which do not come from abelian varieties.
Secondly, the dichotomy above is a version of the Tate--Voloch Conjecture (see, e.g., \cite{MR1405976,TV1Scanlon,TV2Scanlon}) in this formal setting. It states that over a $p$-adic base torsion points should either lie on a subvariety of a semi-abelian variety or be uniformly bounded away from it given a $p$-adic distance function. See also the work of A. Neira \cite{MR2705372} establishing this for functions in the Tate algebra $T_n(K):=\{f=\sum_Ia_IX^I\in K[[X_1,\ldots, X_n]]\vert a_I\to 0 \}$ which vanish at torsion points of the multiplicative group.\par
Along those lines, we wish to point out a second way of making sense of a $p$-adic strengthening. Returning to the case when the ambient group is a torus $\IG_m^n$, we may instead consider all of the torsion points, but require the analytic functions to also converge on the boundary of the unit disk (by this we mean points in $\IC_p^n$ with trivial $p$-adic valuation), leading us to consider functions in $T_n(K)$ only. We then obtain in Section \ref{sec:rigid} a rigid analytic strengthening of Lang's multiplicative Manin--Mumford result: 
\begin{theorem}
Let $X=\Sp(A)$ denote the rigid space associated to an affinoid $K$-algebra $A:=T_n(K)/I$ for some finite extension $K/\IQ_p$. If $X(\IC_p)$ contains infinitely many torsion points of $\IG_m^n(\IC_p)$, then $X$ contains a positive-dimensional algebraic torus. Moreover, the same conclusion holds if infinitely many torsion points approach $X(\IC_p)$ for a suitable $p$-adic distance. 
\end{theorem}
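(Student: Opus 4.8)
The plan is to separate the two assertions of the statement and to reduce the second to the first by means of a Tate--Voloch dichotomy. Every torsion point of $\IG_m^n$ has all coordinates of absolute value $1$, so I would first intersect $X$ with the unit polyannulus $\{\,|x_i|=1\,\}_{1\le i\le n}$, losing no torsion point; equivalently one may assume from the start that $A=K\langle x_1^{\pm1},\ldots,x_n^{\pm1}\rangle/I$, an affinoid quotient of the ring of analytic functions on $\IG_m^n$ that are power-bounded on the polyannulus. Granting the dichotomy -- a torsion point either lies on $X$ or stays at $p$-adic distance at least a uniform $\varepsilon>0$ from it, which is exactly A.~Neira's result \cite{MR2705372} in the multiplicative, Tate-algebra setting, and which may alternatively be deduced from Theorem~\ref{maintheoremformal} applied to the $p$-divisible formal group $\widehat{\IG}_m^n$ together with the reduction step described below -- any infinite family of torsion points \emph{approaching} $X$ must contain infinitely many torsion points \emph{lying on} $X$. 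It therefore suffices to prove: if $X(\IC_p)$ carries infinitely many torsion points of $\IG_m^n$, then $X$ contains a positive-dimensional torus coset (necessarily by a torsion point, hence inside the polyannulus).

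For this last statement I would argue by induction on $n$, feeding in the classical multiplicative Manin--Mumford theorem as a black box. Let $\calY\subseteq\IG_m^n$ be the Zariski closure of the set of torsion points lying on $X$; these are algebraic, so $\calY$ is a positive-dimensional subvariety defined over $\overline{\IQ}$, and by the theorem of Lang together with Laurent's refinement \cite{MR0190146,MR2918151} it is a finite union of torsion cosets $T_it_i$. Our infinitely many torsion points on $X$ are distributed among these finitely many cosets, so one of them, $Z=T_1t_1$ of dimension $e\ge1$, contains infinitely many of them. If $e=n$ then $\calY=\IG_m^n$, so $X$ itself is Zariski dense in $\IG_m^n$; since a nonzero element of the domain $K\langle x^{\pm1}\rangle$ cuts out a subvariety of dimension $\le n-1$, this forces $I=0$, so $X$ is the whole polyannulus and contains $\IG_m^n$. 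If $e<n$, translate by the torsion point $t_1^{-1}$ -- a rigid automorphism of the polyannulus preserving the set of torsion points -- to assume $Z=T_1$ is a subtorus through the identity; a monomial change of coordinates of maximal rank then identifies $T_1\cap\{\,|x_i|=1\,\}$ with a polyannulus $\{\,|y_j|=1\,\}_{1\le j\le e}$ over a finite extension $K'/K$ and carries $X\cap T_1^{\mathrm{an}}$ to a closed affinoid subvariety $X'$ of it still containing infinitely many torsion points of $\IG_m^e$. By the inductive hypothesis $X'$, hence $X\cap T_1^{\mathrm{an}}\subseteq X$, contains a positive-dimensional torus coset; translating back by $t_1$ finishes the step. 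The base case $n\le1$ is immediate, since a proper affinoid subvariety of $\{\,|x|=1\,\}$ is finite.

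The step I expect to be the crux -- and the only genuinely $p$-adic one -- is the Tate--Voloch dichotomy invoked in the first paragraph; it is the substance of Neira's theorem, and I would either cite it or re-derive it along the following lines. Normalize $0\ne f\in I$ so that $\|f\|_{\sup}=1$ and let $\overline f\ne0$ be its reduction in $\overline{\IF}_p[x^{\pm1}]$; for every point $Q$ of the polyannulus one has $\overline{f(Q)}=\overline f(\overline Q)$, so a torsion point with $|f(Q)|<1$ reduces into the hypersurface $V(\overline f)$. One then peels off successive $\im$-adic digits of $f(Q)$, exploiting that (i) the prime-to-$p$ part of a torsion point reduces injectively and coincides with its Teichm\"uller lift, so that Frobenius acts on it by a power map controlled by the residue field, and (ii) its $p$-power part lies in $\widehat{\IG}_m^n$, where Theorem~\ref{maintheoremformal} already supplies a uniform lower bound on the distance of torsion points not lying on a given formal subscheme. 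The difficulty -- and the reason the local statement is not a formal consequence of the global one -- is that over $K$ the Galois (Frobenius) orbits of prime-to-$p$ torsion points can be arbitrarily small, so the orbit-counting proof of Lang's theorem is unavailable; the rigidity that compensates is precisely the requirement that the functions converge on the closed polydisk, i.e. lie in the Tate algebra.
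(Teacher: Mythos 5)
There is a genuine gap, and it sits exactly where the theorem has content. Your induction funnels every branch into the case where the torsion points on $X$ are Zariski-dense, for the \emph{algebraic} Zariski topology, in the ambient torus (since each coset $Z=T_1t_1$ you pass to is an irreducible component of the algebraic closure of the torsion set, the points are automatically algebraically dense in $Z$ after the monomial change of coordinates; so unless some $1$-dimensional coset carries infinitely many of the points, you always land in the case $e=n'\ge 2$). There you assert that algebraic Zariski-density forces $I=0$ because a nonzero element of $K\langle x^{\pm1}\rangle$ cuts out a subvariety of dimension $\le n-1$. This conflates algebraic with analytic density: a nonzero Tate-algebra function cuts out an \emph{analytic} hypersurface, but an analytic hypersurface of the polyannulus can perfectly well contain a point set on which no nonzero \emph{polynomial} vanishes, i.e.\ a set that is algebraically Zariski-dense in $\IG_m^n$. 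Ruling out precisely this configuration --- infinitely many torsion points, algebraically Zariski-dense, all lying on the zero locus of a transcendental analytic function --- is the entire content of the theorem beyond Lang--Laurent, so the step begs the question. Note moreover that the conclusion you draw there ($I=0$) is the $d=n$ instance of the strengthened statement that the paper explicitly leaves open at the end of Section \ref{sec:rigid}; classical Manin--Mumford cannot deliver it for free.

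What is missing is a genuinely analytic rigidity input, and this is what the paper's proof supplies without ever invoking classical Manin--Mumford: the stabilizer dichotomy together with Boxall's inertia lemma (Lemma \ref{Boxallformal}) cuts $X$ down to lower-dimensional intersections $X\cap T_QX$ and thereby disposes of the $p$-power part of the torsion; for the remaining unramified torsion a Frobenius lift acting on Teichm\"uller representatives shows $X$ is stable under $[p]$ (Lemma \ref{lemma:unramifiedclosure}), and the $p$-adic logarithm then manufactures a one-parameter subgroup inside $X$ (Lemma \ref{lemma:stable}). Your secondary reduction is also too quick: you quote Neira's Tate--Voloch dichotomy for \emph{all} torsion and for general $I$, whereas \cite{MR2705372} is proved for hypersurfaces cut out by a distinguished series, and the paper uses it only for unramified torsion, treating the $p$-power part via the formal-group theorem; your alternative re-derivation is only a sketch. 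To salvage your architecture you would have to replace the $e=n$ step by an actual proof that an affinoid containing an algebraically Zariski-dense infinite set of torsion points contains a positive-dimensional subtorus --- which is where the Frobenius/logarithm (or stabilizer/Galois) machinery must enter.
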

It is presently not clear to the author how such results can be extended or formulated beyond the multiplicative case.\par
Finally, we would like to draw the reader's attention to the connection of our results with the theory of non-archimedean dynamical systems, as laid out in J. Lubin's paper \cite{MR1310863}. Letting $\calF$ denote a one-dimensional formal group over $R$, the preperiodic points of the multiplication-by-$n$ map $[n](X)\in X\cdot R[[X]]$ for an integer $n\geq 2$ are precisely the torsion points of the formal group $\calF$. Viewing therefore the torsion points as preperiodic points of a $p$-adic dynamical system attached to $\calF$, our results imply that if the preperiodic points of two such dynamical systems have infinite intersection, they must come from the same dynamical system, as proved by L. Berger \cite[Theorem A]{2018arXiv181105824B} via a different approach. Moreover, we obtain a slightly more general result, and again it suffices for infinitely many preperiodic points to be $p$-adically close to arrive at the same conclusion. Concretely, we have the following corollary to Theorem \ref{maintheoremformal}:
\begin{corollary}\label{cor:dynamic}
Let $\calF$ and $\calG$ be two finite height one-dimensional formal groups over $R$. Let $h\in X\cdot R[[X]]$ denote a power series satisfying $h'(0)\neq 0$. If for all $\varepsilon>0$ there are infinitely many pairs of preperiodic points $(\zeta,\xi)\in \Preper(\calF)\times \Preper(\calG)$ such that
$$\vert h(\zeta)-\xi\vert_p<\varepsilon,$$
then $h\in\Hom(\calF,\calG)$.
\end{corollary}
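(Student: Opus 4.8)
The plan is to deduce the corollary from Theorem \ref{maintheoremformal} applied to the product $\calF\times\calG$, taking as closed subscheme the graph of $h$. Each of $\calF,\calG$ is one-dimensional of finite height, hence a one-dimensional $p$-divisible formal group over $R$, so $\calF\times\calG$ is a two-dimensional $p$-divisible formal Lie group over $R$; writing $\calF=\Spf R[[X]]$ and $\calG=\Spf R[[Y]]$ we have $\calF\times\calG=\Spf R[[X,Y]]$. Because $h(X)\in X\cdot R[[X]]$, the $R$-algebra map $R[[X,Y]]\to R[[X]]$ given by $X\mapsto X$, $Y\mapsto h(X)$ is surjective, and it defines a closed immersion $\Gamma_h:=\Spf R[[X]]\hookrightarrow\calF\times\calG$ onto the graph $\{(x,h(x))\}$, cut out by $Y-h(X)$. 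Two features of $\Gamma_h$ will be used: it contains the identity section of $\calF\times\calG$ (as $h(0)=0$), and the first projection $\pi_1$ restricts to an isomorphism of formal schemes $\Gamma_h\iso\calF$.

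First I would rephrase the hypothesis. The torsion points of $\calF\times\calG$ are precisely the pairs $(\zeta,\xi)$ with $\zeta\in\calF[p^\infty]=\Preper(\calF)$ and $\xi\in\calG[p^\infty]=\Preper(\calG)$, using that a finite height one-dimensional formal group over $R$ has no prime-to-$p$ torsion and that its preperiodic points under $[n]$, for $n\ge2$, are exactly its torsion points. For a torsion point $Q=(\zeta,\xi)$ one has
\[d(\Gamma_h,Q)\le|h(\zeta)-\xi|_p,\]
since $(\zeta,h(\zeta))$ is a $\IC_p$-point of $\Gamma_h$ lying at distance $|h(\zeta)-\xi|_p$ from $Q$ (equivalently, the defining function $Y-h(X)$ of $\Gamma_h$ takes the value $\xi-h(\zeta)$ at $Q$). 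Hence the assumption that for every $\varepsilon>0$ there are infinitely many pairs $(\zeta,\xi)\in\Preper(\calF)\times\Preper(\calG)$ with $|h(\zeta)-\xi|_p<\varepsilon$ produces, for every $\varepsilon>0$, infinitely many torsion points $Q$ of $\calF\times\calG$ with $d(\Gamma_h,Q)<\varepsilon$. This rules out alternative (1) of Theorem \ref{maintheoremformal} for $X=\Gamma_h$, so alternative (2) must hold.

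It then remains to read off the conclusion. By alternative (2), after replacing $R$ by a finite extension, $\Gamma_h$ contains a translate $(\zeta_0,\xi_0)+H$ of a positive-dimensional formal subgroup $H\subseteq\calF\times\calG$. Since $\Gamma_h\cong\Spf R[[X]]$ is integral of relative dimension one over $R$, a proper closed formal subscheme of it has relative dimension zero; as $(\zeta_0,\xi_0)+H$ is positive-dimensional, we get $(\zeta_0,\xi_0)+H=\Gamma_h$. The identity of $\calF\times\calG$ lies on $\Gamma_h$, hence on $(\zeta_0,\xi_0)+H$, which forces $(\zeta_0,\xi_0)\in H(\IC_p)$ and therefore $\Gamma_h=H$: the graph of $h$ is itself a formal subgroup of $\calF\times\calG$. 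Consequently $\pi_1|_H\colon H\to\calF$ is a homomorphism of formal groups that is an isomorphism of formal schemes, hence an isomorphism of formal groups, and $h=\pi_2\circ(\pi_1|_H)^{-1}$ exhibits $h$ as a homomorphism $\calF\to\calG$. Since $h$ has coefficients in the original $R$, this homomorphism is already defined over $R$, so $h\in\Hom(\calF,\calG)$; this is consistent with the hypothesis $h'(0)\ne0$, a nonzero homomorphism of formal groups in characteristic zero having nonvanishing derivative at the origin.

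The step I expect to be the main obstacle is the geometric bookkeeping in the last paragraph: one must be careful that the translated subgroup produced by Theorem \ref{maintheoremformal} genuinely fills out all of $\Gamma_h$, rather than an embedded or lower-dimensional piece, and must keep track of the finite base change in order to descend the conclusion to $\Hom_R(\calF,\calG)$. A secondary point to make precise is the comparison between the distance $d(\Gamma_h,\cdot)$ on $\calF\times\calG$ entering Theorem \ref{maintheoremformal} and the quantity $|h(\zeta)-\xi|_p$; in the coordinates $(X,Y)$ this is the elementary inequality displayed above, but the normalization of $d$ should be pinned down.
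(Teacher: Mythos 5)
Your proposal is correct and follows essentially the same route as the paper: form the graph $\Gamma_h\subset\calF\times\calG$, note that the approximation hypothesis rules out alternative (1) of Theorem \ref{maintheoremformal}, and use irreducibility and one-dimensionality of the graph to conclude it is itself a formal subgroup. The only (harmless) divergence is at the very end: the paper extracts the commutation relation $h\circ[2]_\calF=[2]_\calG\circ h$ and cites Lemma 4.4 of \cite{2018arXiv181105824B} to get $h\in\Hom(\calF,\calG)$, whereas you read the homomorphism property off directly from the graph being stable under the group law, which is equally valid.
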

This answers the questions raised in \cite[Section 4.3]{2018arXiv181105824B}. The fact that unlikely intersection results have dynamical analogues and the connections between these statements is well-known in the classical setting, and dynamical unlikely intersection results have been a fruitful area of research. We refer the reader to \cite[Theorem 1.2.]{MR2817647} for an unlikely intersection result for preperiodic points as in Corollary \ref{cor:dynamic} in the setting rational functions of degree at least $2$ over $\IC$. See also for instance recent work \cite{MR3126567,MR3468757,MR3801489} and the dynamical conjectures laid out in \cite[Section 4]{MR2408228}. It would be interesting to invesitgate how to extend the results for $p$-adic dynamical systems further. For possible avenues, we refer the reader to the discussion in \cite[Section 4]{2018arXiv181105824B}. In particular, do there have to be formal groups lurking in the background for unlikely intersection results for $p$-adic dynamical systems to hold?

\section{Manin--Mumford for formal groups}\label{sec:formal}
Let $R$ denote a complete discretely valued ring of mixed characteristic $(0,p)$ and let $\calF:=\Spf(R[[X_1,\ldots, X_n]])$ denote a smooth, $n$-dimensional formal group over $R$. These can be defined via a formal group law or as functors on the category of nilpotent $R$-algebras $\calF:\Nil_R\to \Ab$ that are formally smooth, left exact and commute with arbitrary direct sums. We moreover require $p$-divisibility of $\calF$, meaning that multiplication $[p]:\calF\to\calF$ is an \emph{isogeny}, i.e. has a representable kernel. We slightly abuse notations and write $\calF(A)$ for the points of $\calF$ over the (topologically) nilpotent elements of an $R$-algebra $A$; for instance $\calF(\IC_p)$ denotes the points on the $p$-adic unit ball $\im_{\IC_p}^n$ of the formal group. We denote the torsion points of $\calF$ on that ball, which all have $p$-power order, by $\calF[p^\infty]$.\par
For a closed affine formal subscheme $X\hookrightarrow \calF$, given by $X=\Spf(R[[X_1,\ldots, X_n]]/I)$ for some ideal of definition $I$, our goal is to examine under which circumstances more torsion points than expected lie $p$-adically close to $X$. To that end, for any $P\in\calF(\IC_p)$ we define the distance function
$$d(P,X):=\max_{f\in I}(\vert f(P)\vert_p)$$
given a $p$-adic absolute value $\vert-\vert_p$. Since $R[[X_1,\ldots, X_n]]$ is a Noetherian ring, it suffices to compute this distance on the finitely many generators of $I$. The distance depends on a variety of choices, including the choice of normalization of the $p$-adic absolute value and the choice of coordinates on the formal Lie group. However, the distance from a torsion point to a formal subscheme is invariant under automorphisms of the ambient formal Lie group, and this suffices for our purposes. We also abbreviate $X(\varepsilon):=\{\zeta\in \calF[p^\infty]\vert d(\zeta,X)<\varepsilon \}$.\par
We recall that the classical Manin--Mumford Conjecture can be formulated as follows: for a semi-abelian variety $G$, one defines the \emph{special subvarieties} to be the translates of semi-abelian subvarieties of $G$ by torsion points. The \emph{special points} are just the zero-dimensional special subvarieties. The conjecture then states:
\begin{conj}[Manin--Mumford]\label{conj:mm}
An irreducible component of the Zariski-closure of a set of special points is a special subvariety. 
\end{conj}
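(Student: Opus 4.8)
The displayed statement is in fact a theorem: Raynaud \cite{MR688265,MR717600} treated the case of abelian varieties, Lang \cite{MR0190146} that of tori, and the general semi-abelian case follows from these by a standard extension argument along the toric part. The plan I would follow is the o-minimality strategy of Pila and Zannier, which I sketch for $G=A$ an abelian variety defined, along with the subvariety $V\subseteq A$, over a number field $F$; one reduces to this case since a semi-abelian $G$ sits in an exact sequence $1\to T\to G\to A\to 1$ and the argument propagates through it. After replacing $V$ by an irreducible component of the Zariski closure of its torsion, I would argue by induction on $\dim V$ (the case $\dim V=0$ being immediate) that, if the torsion of $A$ is Zariski-dense in $V$, then $V=t+B$ for some abelian subvariety $B\subseteq A$ and some torsion point $t$.

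The main mechanism: uniformize $A(\IC)\cong\IR^{2g}/\IZ^{2g}$ by a map $\pi$ with fundamental domain $D=[0,1)^{2g}$, so that a torsion point of order dividing $N$ corresponds to a point of $\tfrac{1}{N}\IZ^{2g}$ inside $D$, of multiplicative height $\ll N$. The set $Z:=\pi^{-1}(V)\cap D$ is definable in the o-minimal structure $\IR_{\mathrm{an}}$. On the one hand, by Masser's lower bound a torsion point of exact order $N$ has at least $c\,N^{\kappa-\epsilon}$ Galois conjugates over $F$ for some $\kappa=\kappa(A)>0$, and these conjugates again lie on $V$, so $Z$ contains $\gg N^{\kappa-\epsilon}$ rational points of height $\ll N$. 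On the other hand, the Pila--Wilkie counting theorem bounds the rational points of height $\le T$ on the \emph{transcendental part} $Z^{\mathrm{trans}}$ --- that is, $Z$ with the union of its connected positive-dimensional semialgebraic subsets removed --- by $O_\epsilon(T^\epsilon)$. Since density of torsion forces arbitrarily large orders $N$, fixing $\epsilon<\kappa$ shows that for such $N$ almost all of the above rational points lie on positive-dimensional semialgebraic subsets of $Z$.

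The decisive geometric input is then an Ax--Lindemann--Weierstrass type statement for the abelian exponential: every maximal connected positive-dimensional semialgebraic subset $W\subseteq\pi^{-1}(V)$ has $\pi(W)$ Zariski-dense in a coset of a positive-dimensional abelian subvariety contained in $V$. As a coset carrying a torsion point is a torsion coset, the counting dichotomy then yields that all but finitely many torsion points of $V$ lie on positive-dimensional torsion cosets contained in $V$. Using the classical fact that the union of such cosets is Zariski-closed, together with the observation that an irreducible variety over an uncountable field is not a countable union of proper subvarieties, one deduces that the identity component $B$ of $\Stab(V)$ is positive-dimensional. Passing to $A/B$ --- where the image of $V$ has dense torsion, strictly smaller dimension, and trivial stabilizer --- the inductive hypothesis forces that image to be a single torsion point, whence $V$ is the translate of $B$ by a torsion point.

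The hard part will be the Ax--Lindemann step: controlling the \emph{algebraic part} of $\pi^{-1}(V)$ --- showing that it is built from cosets of linear subspaces, equivalently that the transcendental uniformization interacts with the algebraic structure only along subgroups --- is the geometric core, while the Pila--Wilkie theorem and Masser's Galois bound, though substantial, may be quoted as black boxes. I note finally that Raynaud's original proof \cite{MR688265,MR717600} and Boxall's variant \cite{MR1345173} instead proceed $p$-adically, via reduction at a prime of good ordinary reduction, where the formal group carries no prime-to-$p$ torsion while Frobenius acts transparently on the rest --- an approach far closer in spirit to the formal and rigid analytic methods pursued in the remainder of this paper.
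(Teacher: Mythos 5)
The statement you are proving is not actually proved in the paper: Conjecture \ref{conj:mm} is recalled there purely as classical background (the theorem of Raynaud for abelian varieties and of Lang for tori, with the general semi-abelian case due to Hindry), and the paper's own contribution lies in adapting a \emph{different} proof of it --- Boxall's Galois-theoretic argument --- to $p$-adic formal and rigid settings. Your proposal is a correct outline of the Pila--Zannier strategy, and as such it is a genuinely different route from the one the paper builds on. The contrast is instructive. Pila--Zannier works archimedeanly: uniformize, count rational preimages of torsion in a definable fundamental domain, play a Galois-theoretic lower bound (Masser) against the Pila--Wilkie upper bound, and absorb the surviving points into cosets via Ax--Lindemann; the payoff is great generality (Shimura varieties, mixed settings), but every ingredient is tied to the complex uniformization. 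Boxall's method, which the paper transports to $\Spf(R[[X_1,\ldots,X_n]])$, uses only the local Galois action at $p$ --- producing $\sigma$ with $\sigma(\zeta)-\zeta$ of exact order $p^r$ and intersecting $X$ with its translates $T_QX$ to drop the dimension --- and this is precisely why it survives in the formal category, where no archimedean uniformization, no counting theorem, and no Ax--Lindemann statement are available. Two small cautions on your sketch: the semi-abelian case does not follow from the abelian and toric cases by a merely ``standard extension argument'' (this is Hindry's theorem, and within the o-minimal framework it requires genuine additional work on the mixed uniformization); and the endgame step, deducing that $\Stab(V)^0$ is positive-dimensional from the Zariski-closedness of the union of positive-dimensional torsion cosets, needs the countability of the family of abelian subvarieties made explicit, exactly as you indicate. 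Neither is a gap in intent, but both are places where ``standard'' hides real content.
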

Motivated by this result, in our setting a natural candidate for a \emph{special formal subscheme} is the translate by a torsion point in $\calF[p^\infty]$ of a formal subgroup of $\calF$, and the points in $\calF[p^\infty]$ should precisely constitute the \emph{special points}. Combined with the aforementioned expectation (see \cite{MR1405976,MR2185637}) that in this $p$-adic metric there should exist a uniform lower bound on the distance of a special point $\zeta\in\calF[p^\infty]$ to $X$ provided $\zeta\not\in X$, we expect that for $\varepsilon>0$ small enough $X(\varepsilon)$ is finite unless $X$ contains a positive-dimensional special formal subscheme. This implies a slightly weaker result than the analogue of Conjecture \ref{conj:mm}, which we discuss in the next section. \par

The proof strategy we employ is an adaptation of the following argument from the classical algebraic variety setting: a (semi-)abelian variety $G$ acts on a subvariety $V\subset G$ by translation and one therefore defines a stabilizer of $V$, which is an algebraic subgroup passing through the origin (assuming one has arranged for $V$ to pass through the origin). It follows from the classification of commutative group varieties that the connected component of the stabilizer passing through the origin is again a (semi-)abelian variety embedded in $G\cap V$. Hence, after taking quotients, we may as well assume that the stabilizer is finite. In this case, one then produces, using Galois action, a point $Q$ outside the stabilizer such that the translate $T_QV$ of $V$ by $-Q$ also contains infinite torsion. This reduces the proof to the lower-dimensional $T_QV\cap V$, and one inducts.  
This method, due to J. Boxall, provides a proof of Manin--Mumford for $n$-primary torsion, see \cite{MR1345173}. We are able to carry through most of this strategy in our setting, while one has to circumvent some difficulties when dealing with schemes which are not of finite type. \par

In our $p$-adic formal setting, consider the orbits or translates of $X\subset \calF=\Spf(R[[X_1,\ldots, X_n]])$ under the action of $\calF$ as well as the stabilizer of $X$ under that action. For any $Q\in \calF(A)$ we define the translate $T_QX$ on the level of points by $T_QX(A)=\{x-Q\vert x\in X(A)\}$.
\begin{lemma}\label{lemma:translate}
The translate $T_QX$ is represented by a formal scheme and comes with a closed immersion $i_Q:T_QX\hookrightarrow \calF$ over the ring $R[Q]$, which is Noetherian for $Q\in\overline{K}$. 
\end{lemma}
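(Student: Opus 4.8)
The plan is to realize $T_QX$ as the preimage of $X$ under the translation-by-$Q$ automorphism of $\calF$, after base changing from $R$ to $A:=R[Q]$. First I would record that $A$ is well behaved when $Q\in\overline K$: the coordinates $Q=(Q_1,\dots,Q_n)$ of a point of $\calF$ lie in the open unit ball, so $|Q_i|_p<1$, and when $Q\in\overline K$ each $Q_i$ is moreover algebraic over $K$, hence integral over $R$, since $R$ is a discretely valued ring (thus integrally closed in $K$) and the valuation extends uniquely to $\overline K$. Therefore $A=R[Q]$ is a finitely generated integral $R$-algebra, hence module-finite over $R$; in particular $A$ is Noetherian and, being a finite module over the complete local Noetherian ring $R$, it is $\pi$-adically complete. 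It follows that $A[[X_1,\dots,X_n]]$ is Noetherian, that every ideal in it is closed, and that $\calF_A:=\Spf A[[X_1,\dots,X_n]]$ is the base change of $\calF$ along $\Spf A\to\Spf R$.

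Next I would build translation. Let $F=(F_1,\dots,F_n)\in R[[X,Y]]^{n}$ be the formal group law of $\calF$ and $\iota$ its formal inverse. For $Q\in\im_A^n$ the coefficientwise substitution $\tau_Q^{*}(X_i):=F_i(X_1,\dots,X_n,Q_1,\dots,Q_n)$ converges in $A$, because the $F_i$ have coefficients in $R$ and the monomials $Q^{\beta}$ tend $\pi$-adically to $0$; it thus defines a continuous $A$-algebra endomorphism $\tau_Q^{*}$ of $A[[X]]$, i.e.\ a morphism $\tau_Q\colon\calF_A\to\calF_A$ of formal $A$-schemes which on $A'$-points, for any adically complete $A$-algebra $A'$, is $y\mapsto F(y,Q)=y+Q$. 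From associativity and the identity and inverse axioms of $F$ one gets $\tau_{Q'}\circ\tau_Q=\tau_{F(Q,Q')}$ and $\tau_0=\mathrm{id}$, so that $\tau_Q$ is an automorphism with inverse $\tau_{\iota(Q)}$.

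With this in hand, write $I=(f_1,\dots,f_r)$ for the finitely generated ideal defining $X$. Since $\tau_Q$ is an isomorphism, the preimage of the closed formal subscheme $X_A=\Spf A[[X]]/IA[[X]]$ is the closed formal subscheme
\[
\tau_Q^{-1}(X_A)=\Spf A[[X]]\big/\bigl(f_1(F(X,Q)),\dots,f_r(F(X,Q))\bigr),
\]
and on $A'$-points this equals $\{\,y\in\calF(A'):y+Q\in X(A')\,\}=\{\,x-Q: x\in X(A')\,\}=T_QX(A')$, the identifications $x\leftrightarrow y=x-Q$ being inverse to one another by the group-law axioms. Hence $\tau_Q^{-1}(X_A)$ represents $T_QX$ and the inclusion into $\calF_A$ is the desired closed immersion $i_Q$, a closed immersion of formal $R$-schemes lying over $\Spf R[Q]$; and $R[Q]$ is Noetherian by the first step.

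The one place a genuine hypothesis enters — and the step I would be most careful about — is the finiteness of $R[Q]$ over $R$, which is exactly what the assumption $Q\in\overline K$ provides and what secures both the $\pi$-adic completeness needed to make sense of the substitution defining $\tau_Q$ and the Noetherianity of $A[[X]]$ needed for $X_A$ and $\tau_Q^{-1}(X_A)$ to be honest closed formal subschemes cut out by finitely many equations. The rest is the routine formalism of translations on formal groups; the only bookkeeping to watch is that substitution into power series with $R$-coefficients is continuous and compatible with composition, so that the group-law identities transport verbatim to the automorphisms $\tau_Q$.
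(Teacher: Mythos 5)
Your proposal is correct and follows essentially the same route as the paper: the paper's (much terser) proof likewise defines the ideal of $T_QX$ by substituting the group law into the generators $\phi_i$, i.e.\ takes $(\phi_1(F(X,Q)),\ldots,\phi_d(F(X,Q)))$, and notes that $R[Q]$ is Noetherian when $[K(Q):K]$ is finite. Your additional verifications (module-finiteness of $R[Q]$ over $R$, $\pi$-adic convergence of the substitution, and that $\tau_Q$ is an automorphism inverse to $\tau_{\iota(Q)}$) are exactly the details the paper leaves implicit.
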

\begin{proof}
It suffices to show this for $X=\Spf(R[[X_1,\ldots, X_n]]/I)$ for some ideal $I$ of the Noetherian ring $R[[X_1,\ldots, X_n]]$.
Let $(\phi_1(X), \ldots, \phi_d(X))$ be generators of the ideal of definition and let $F(X,Y)\in R[[X_1,\ldots, X_n, Y_1,\ldots, Y_n]]$ denote the formal group law. Then the ideal of definition of $T_QX$ is generated by the power series $(\phi_1(F(X,Q)),\ldots, \phi_n(F(X,Q)))$. Moreover, when the extension degree $[K(Q):K]$ is finite $R[Q]$ is Noetherian, as claimed.

\end{proof}
 We also define the stabilizer of the formal subscheme on the level of points via 
$$\Stab(X)(A):=\{P\in\calF(A)\vert X(A)+_\calF P= X(A)\}$$
for a commutative nilpotent $R$-algebra $A$ and proceed to list some properties of the stabilizer. 
\begin{lemma}\label{stabilizerisagroup}
For any commutative nilpotent $R$-algebra $A$, $\Stab(X)(A)$ is a subgroup of $\calF(A)$ and 
$$\Stab(X)(A)=\bigcap_{Q\in X(A)} T_QX(A)$$
as sets. Moreover, if $X$ passes through the identity, then $\Stab(X)(A)$ is contained in $X(A)$. 
\end{lemma}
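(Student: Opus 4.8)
The plan is to establish the three assertions by unwinding the definitions, the one genuinely nontrivial point being that a point which translates $X(A)$ \emph{into} itself must translate it \emph{onto} itself. For any $P\in\calF(A)$, translation by $P$ is a bijection of the group $\calF(A)$, with inverse translation by $-P$. The identity $0$ stabilises $X(A)$; if $P_1,P_2$ stabilise $X(A)$ then $X(A)+_\calF(P_1+_\calF P_2)=(X(A)+_\calF P_1)+_\calF P_2=X(A)$ by associativity of the formal group law; and if $P$ stabilises $X(A)$, applying the bijection ``translate by $-P$'' to the equality $X(A)+_\calF P=X(A)$ yields $X(A)=X(A)+_\calF(-P)$. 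Hence $\Stab(X)(A)$ is a subgroup of $\calF(A)$. For the intersection description, note that $P\in T_QX(A)$ precisely when $P+_\calF Q\in X(A)$, so $P\in\bigcap_{Q\in X(A)}T_QX(A)$ if and only if $X(A)+_\calF P\subseteq X(A)$; the inclusion $\Stab(X)(A)\subseteq\bigcap_{Q}T_QX(A)$ is then immediate from the definition of the stabiliser.

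For the reverse inclusion, given $P$ with $X(A)+_\calF P\subseteq X(A)$ I would read this at the level of formal schemes over $R[P]$, which is Noetherian by Lemma~\ref{lemma:translate}. Translation $\tau_P$ by $P$ is an automorphism of $\calF$ under which the closed formal subscheme $X$ is carried into $X$, so restriction of $\tau_P$ gives a factorisation $X\xrightarrow{\ \sim\ }\tau_P(X)\hookrightarrow X$. On coordinate rings this exhibits a surjective endomorphism of the Noetherian ring $R[P][[X_1,\ldots,X_n]]/I$; since the kernels of its powers form an ascending chain it is injective, hence an isomorphism, and therefore $\tau_P(X)=X$. Thus $X(A)+_\calF P=X(A)$, i.e. $P\in\Stab(X)(A)$, which gives $\bigcap_{Q}T_QX(A)\subseteq\Stab(X)(A)$.

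Finally, if $X$ passes through the identity then $0\in X(A)$ for every $A$, so for $P\in\Stab(X)(A)$ we obtain $P=0+_\calF P\in X(A)+_\calF P=X(A)$, i.e. $\Stab(X)(A)\subseteq X(A)$.

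The step I expect to be the main obstacle is the passage from inclusion to equality: \emph{a priori} an injective self-map of the (typically infinite) set $X(A)$ need not be surjective, and it is precisely here that one must exploit that translations are \emph{automorphisms} of the ambient formal group together with the Noetherian, hence descending-chain, property of the formal scheme $X$. One should also be slightly careful to promote the set-theoretic inclusion of $A$-points to an inclusion of formal schemes before the chain argument applies; this is unproblematic for the points $P$ — torsion points defined over finite extensions of $K$ — to which the lemma is subsequently applied.
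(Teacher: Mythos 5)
Your treatment of the subgroup axioms, of the inclusion $\Stab(X)(A)\subseteq\bigcap_{Q}T_QX(A)$, and of the containment $\Stab(X)(A)\subseteq X(A)$ coincides with the paper's proof, which is the same direct unwinding of the definitions. You have, moreover, correctly isolated the one delicate point: since $T_QX(A)=\{x-Q\mid x\in X(A)\}$, the intersection $\bigcap_{Q\in X(A)}T_QX(A)$ is exactly the set $\{P\in\calF(A)\mid X(A)+_\calF P\subseteq X(A)\}$, whereas the stabilizer is defined by the \emph{equality} $X(A)+_\calF P=X(A)$. The paper's own proof of the reverse inclusion only verifies that $g+P\in X(A)$ for every $P\in X(A)$, i.e.\ the inclusion, and silently treats this as membership in the stabilizer; you are being more careful than the source here.

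However, your proposed repair of this step does not go through as written, for essentially the two reasons you flag yourself and then set aside. First, the lemma quantifies over an arbitrary commutative nilpotent $R$-algebra $A$ and an arbitrary $P\in\calF(A)$; such a $P$ is an $n$-tuple of (topologically) nilpotent elements of $A$ and need not be an algebraic point, so there is no Noetherian ring $R[P]$ and no coordinate-ring endomorphism to speak of. Second, and more seriously, even for algebraic $P$ the hypothesis is only a set-theoretic inclusion $X(A)+_\calF P\subseteq X(A)$ of $A$-points for the single algebra $A$ under consideration; since $X(A)$ need not be Zariski-dense in $X$ (it can be finite or empty), this does not yield the scheme-theoretic inclusion of $\tau_P(X)$ into $X$ from which your (otherwise correct) surjective-endomorphism-of-a-Noetherian-ring argument starts. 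So the passage from inclusion to equality remains unproved in your write-up, exactly as it does in the paper's. Two honest ways to close it: (i) weaken the displayed identity to the statement that $M:=\bigcap_Q T_QX(A)$ is the submonoid $\{P\mid X(A)+_\calF P\subseteq X(A)\}$ and that $\Stab(X)(A)=M\cap(-M)$ is its group of invertible elements, adjusting the later uses accordingly; or (ii) observe that for the points the paper actually needs, namely torsion points $P$ of order $p^k$, one has $-P=[p^k-1]P$, so $X(A)+_\calF P\subseteq X(A)$ already forces $X(A)+_\calF(-P)\subseteq X(A)$ and hence equality.
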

\begin{proof}
The proof of the claim is straightforward: let $g\in \Stab(X)(A)$, then for all $Q\in X(A)$, we have $g+Q=P\Leftrightarrow g=P-Q$ for some $P\in X(A)$, with addition in the formal group. This shows $g\in T_QX(A)$. Conversely, for $g\in\bigcap_{Q\in X(A)} T_QX(A)$ we may for any $P\in X(A)$ write $g+P=(g_P-P)+P=g_P$ for some unique $g_P\in X(A)$, showing the reverse inclusion. It also follows immediately from the definition that $\Stab(X)(A)$ is stable under addition in the formal group and contains the identity. Moreover, if $g\in \Stab(X)(A)$, then $X(A)=-g+g+X(A)=-g+X(A)$, so $\Stab(X)$ is stable under inverses and $\Stab(X)(A)$ is a subgroup of $\calF(A)$. Finally, if the identity $e$ is in $X$, clearly $\Stab(X)(A)\subset T_eX(A)=X(A)$.

\end{proof}

\begin{lemma}\label{lemma:infinitestab}
Assume that $\Stab(X)(\overline{K})$ is infinite and $X$ passes through the identity. Then the stabilizer is a formal subgroup scheme of $\calF$ over the valuation ring $\calO_L$ of some finite extension $L$ of $K$. It is contained in $X$ and has a positive-dimensional connected component.
\end{lemma}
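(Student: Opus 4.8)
The plan is to leverage the set-theoretic identity $\Stab(X)(\overline{K})=\bigcap_{Q\in X(\overline{K})} T_QX(\overline{K})$ from Lemma \ref{stabilizerisagroup} to exhibit the stabilizer as a closed formal subscheme, and then to combine group-scheme structure with the hypothesis of infinitely many points to force positive dimension. First I would show the stabilizer is represented by a closed formal subscheme of $\calF$. The naive intersection above is over infinitely many $Q$, so one cannot directly take it as an ideal-theoretic sum; instead, by Noetherianity of $R[[X_1,\ldots,X_n]]$ (and, after base change to $\calO_L$ for suitable finite $L/K$, of $\calO_L[[X_1,\ldots,X_n]]$), the increasing union of the ideals $\sum_{Q\in S}I_{T_QX}$ over finite subsets $S\subset X(\overline{K})$ stabilizes; pick a finite $S_0$ achieving the maximum. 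One checks that the closed formal subscheme $Z$ cut out by $\sum_{Q\in S_0}I_{T_QX}$ has the property that $Z(A)=\Stab(X)(A)$ for all nilpotent $\calO_L$-algebras $A$: the inclusion $\Stab(X)(A)\subset Z(A)$ is clear since a stabilizing point lies on every $T_QX$, and conversely a point of $Z(A)$ lies in $T_QX(A)$ for all $Q\in X(\overline{K})$ (by maximality of $S_0$, adding any further $Q$ does not shrink $Z$), hence is a stabilizer point by the reverse inclusion in Lemma \ref{stabilizerisagroup}. This realizes $\Stab(X)$ as a closed formal subscheme; it is automatically a formal subgroup since its functor of points is a subgroup functor by Lemma \ref{stabilizerisagroup}, and it is contained in $X$ because $X$ passes through the identity (again Lemma \ref{stabilizerisagroup}).

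Next I would argue positive-dimensionality of the connected component. A closed formal subgroup scheme $G$ of $\calF$ over $\calO_L$ that is zero-dimensional and flat would be finite over $\Spf(\calO_L)$, hence would have only finitely many $\overline{K}$-points — contradicting the hypothesis that $\Stab(X)(\overline{K})$ is infinite. The subtlety is that $G$ need not be $\calO_L$-flat a priori, so I would first replace $G$ by its flat part, or equivalently observe that the reduced structure / the quotient by $L$-power torsion in the structure sheaf gives a flat closed formal subgroup over $\calO_L$ with the same generic fiber and the same $\overline{K}$-points, still containing infinitely many points. Its generic fiber $G_K$ is then a closed rigid-analytic (or formal, completed) subgroup of the open polydisc that is $p$-divisible-compatible; a zero-dimensional such object over a finite extension has bounded, in fact finite, number of geometric points. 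Hence $\dim G\geq 1$, and passing to the connected component containing the identity (a closed formal subgroup, being the completion along the identity section) gives the positive-dimensional formal subgroup contained in $X$.

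The main obstacle, I expect, is the first step: handling the fact that $\Stab(X)$ is cut out by an ideal that is the sum of infinitely many ideals $I_{T_QX}$, and the translates $T_QX$ live over varying rings $R[Q]$ of growing degree over $K$. One must arrange a single finite extension $L/K$ over which all relevant $Q\in S_0$ are defined — this is fine since $S_0$ is finite — but one should also make sure the stabilized ideal computed over $\calO_L$ really does compute the stabilizer functor on all nilpotent algebras, not merely on $\overline{K}$-points; this requires the maximality argument to be run at the level of ideals in $\calO_L[[X_1,\ldots,X_n]]$ rather than pointwise. A secondary technical point is the flatness issue in the second step, which I would resolve by the standard device of killing $\calO_L$-torsion in the structure sheaf and noting this preserves both the group structure and the (infinite) set of $\overline{K}$-points while making the scheme $\calO_L$-flat, so that fiber dimension is constant and the zero-dimensional case genuinely yields finitely many points.
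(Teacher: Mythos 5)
Your overall architecture matches the paper's: cut the stabilizer out by finitely many translates $T_QX$, observe the result is a closed formal subscheme over the ring of integers of a finite extension, then upgrade to a group scheme and rule out dimension zero. Two points, however, deserve attention; the second is a genuine gap.

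First, a technical wrinkle in your finiteness step. You stabilize the ascending chain of ideals $\sum_{Q\in S}I_{T_QX}$ ``by Noetherianity of $R[[X_1,\ldots,X_n]]$, after base change to $\calO_L$'' --- but $L$ cannot be fixed in advance, since it depends on which $Q$ end up in $S_0$, and the ideals for varying $Q$ live in different rings $R[Q][[X]]$; there is no single Noetherian ring containing the whole chain as written (e.g.\ $\calO_{\overline K}[[X]]$ is not Noetherian). This is fixable --- run the chain in $\overline K[[X_1,\ldots,X_n]]$ or $\IC_p[[X_1,\ldots,X_n]]$, or argue as the paper does by strict decrease of the dimension of every component at each step, which terminates after at most $\dim X$ steps and only ever involves finitely many algebraic $Q$.

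The genuine gap is your justification that $Z$ is a formal sub\emph{group} scheme. You deduce this from the claimed identity $Z(A)=\Stab(X)(A)$ for all nilpotent $A$, invoking Lemma \ref{stabilizerisagroup}. But that lemma expresses $\Stab(X)(A)$ as $\bigcap_{Q\in X(A)}T_QX(A)$, an intersection over $Q\in X(A)$, whereas your $Z$ is cut out by translates at points $Q\in S_0\subset X(\overline K)$. For a general nilpotent $A$ these $\overline K$-points are not elements of $X(A)$, so neither inclusion between $Z(A)$ and $\Stab(X)(A)$ follows from the lemma; in particular the scheme-theoretic group structure on $Z$ is not established by this route. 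The paper only claims the identification on $\overline K$-points and then proves the group-scheme property directly: for each $f$ in the ideal of $\calS=\bigcap_i T_{Q_i}X$, the composite $f\circ\phi$ with the formal group law vanishes on the Zariski-dense subset $\calS(\overline K)\times\calS(\overline K)$ of $\calS\times\calS$, hence vanishes identically, so $\calS$ is stable under addition (and similarly under inversion). You need this density argument, or some substitute for it, to close the step. Your treatment of positive-dimensionality (passing to the flat part by killing $\calO_L$-torsion, then noting a finite flat zero-dimensional group scheme has finitely many geometric points) is fine and in fact more careful than the paper, which dismisses this as clear.
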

\begin{proof}
We first claim that there exist finitely many algebraic points $Q_1,\ldots, Q_k \in X(\overline{K})$ such that
$$\Stab(X)(\overline{K})=T_{Q_1}X(\overline{K})\cap\cdots\cap T_{Q_k}X(\overline{K}),$$
and we may as well take $Q_1$ to be the identity. Assume the claim does not hold. Then for any component $Y$ of a finite intersection $T_{Q_1}X\cap\cdots\cap T_{Q_k}X$ we may then find $Q_{k+1}$ such that all components of $Y\cap T_{Q_{k+1}}X$ have dimension $d\leq \max(0,\dim Y-1)$. Since $\Stab(X)(\overline{K})=\bigcap_{Q \in X(\overline{K})}T_QX(\overline{K})$ as sets, proceeding in this way we may after a finite number of steps arrange that $\Stab(X)(\overline{K})\subseteq Z(\overline{K})$ for some zero-dimensional formal scheme $Z$. It would follow that $\Stab(X)(\overline{K})$ is finite, a contradiction. \\
Now set $\calS:=T_{Q_1}X\cap\cdots\cap T_{Q_k}X$ with $Q_1,\ldots, Q_k \in X(\overline{K})$ as in the claim, so that $\calS$ is by Lemma \ref{lemma:translate} a closed formal subscheme of $\calF$ over $\calO_L$ for some finite extension $L$ of $K$. It follows from the claim that $\calS(\overline{K})$ is an infinite subgroup of $\calF(\overline{K})$ contained in $X$. A fortiori $\calS(A)$ is a subgroup of $\calF(A)$ for any nilpotent commutative $R$-algebra $A$: denoting by $\phi\in R[[X_1,\ldots,X_n,Y_1,\ldots, Y_n]]$ the formal group law of $\calF$ and writing $\calS=\Spf(\calO_L[[X_1,\ldots,X_n]]/J)$, for any $f\in J$ we have that $f\circ\phi$ vanishes along the dense set $\calS(\overline{K})\times \calS(\overline{K})$ of $\calS\times \calS$ so that $\calS$ is stable under the formal group law. Similarly we deduce stability under taking inverses so that $\calS$ is the desired formal subgroup scheme. The remaining statements are clear. 
\end{proof}

It remains to examine the case when $\Stab(X)(\overline{K})$ is finite, which we tackle using the Galois action on the points of our formal schemes. 
Write $\Gal(L)$ for the absolute Galois group of a $p$-adic field $L$. This group acts on the $\IC_p$-points of the formal group. We utilize a $p$-divisible formal Lie groups analogue of \cite[Lemma 2]{MR1426536} concerning the action of inertia which still holds in this setting, mutatis mutandis:


\begin{lemma}\label{Boxallformal}
Let $\calF$ be a (finite dimensional) $p$-divisible formal Lie group defined over $R$ and write $K=R[1/p]$ and $L=K(\calF[p^r])$ for a fixed $r>0$ with $r\neq 1$ if $p=2$. Suppose $P$ is a torsion point in $\calF(\IC_p)$ not defined over $L$, then there exists $s\in \Gal(L)$ such that $s(P)-P\in \calF[p^r]\setminus \calF[p^{r-1}]$, where subtraction is given by the formal group law of $\calF$. 
\end{lemma}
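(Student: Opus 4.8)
The plan is to argue by contradiction, exploiting the fact that the torsion subgroup $\calF[p^\infty]$ is a Galois module isomorphic (as an abelian group) to $(\IQ_p/\IZ_p)^h$ where $h$ is the height, together with a pro-$p$ filtration argument on the relevant Galois groups. Suppose $P \in \calF(\IC_p)$ is torsion of exact order $p^m$, is not defined over $L = K(\calF[p^r])$, and yet for every $s \in \Gal(L)$ we have $s(P) - P \in \calF[p^{r-1}]$. Necessarily $m > r$, for otherwise $P \in \calF[p^r]$ would already be defined over $L$. Consider the field $M := L(P)$; it is a nontrivial finite extension of $L$ by assumption, and it is Galois over $L$ because $P$ generates a cyclic $\Gal(L)$-stable subgroup modulo $\calF[p^{r-1}] \subset \calF[p^r] \subset \calF(L)$ — more precisely the orbit of $P$ lies in the finite set $P + \calF[p^{r-1}]$, all of whose elements lie in $M$. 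The first step is therefore to show $\Gal(M/L)$ embeds into $\calF[p^{r-1}]$ via $s \mapsto s(P) - P$ (using that $P$ is fixed by $\Gal(M)$ and the cocycle-type computation $s t(P) - P = (s(t(P)-P)) + (s(P)-P)$, which is additive precisely because $t(P) - P \in \calF[p^{r-1}] \subset \calF(L)$ so $s$ acts trivially on it). Hence $\Gal(M/L)$ is a nontrivial finite abelian $p$-group killed by $p^{r-1}$; in particular $\Gal(M/L)$ has exponent dividing $p^{r-1}$.

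The second step is to derive a contradiction from the structure of $\Gal(L'/K)$ for $L' = L(P)$ or a suitable sub/overfield, by comparing with the image of Galois in $\Aut(\calF[p^m])$. One shows that $s(P) - P$ ranges over all of $\calF[p^r] \setminus \calF[p^{r-1}]$ for appropriate $s$ by analyzing the action on the Tate module: the image of $\Gal(K)$ in $\GL_h(\IZ_p) = \Aut(T_p\calF)$ is open (this is where $p$-divisibility and finite-dimensionality of $\calF$ enter, giving a $p$-divisible group of height $h$), and one reduces to a statement about the kernel of reduction $\GL_h(\IZ_p) \to \GL_h(\IZ/p^r)$, whose elements $g$ satisfy $g(P) - P \in \calF[p^m] \setminus \calF[p^{m-1}]$ scaled appropriately. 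The cleanest route is to mimic \cite[Lemma 2]{MR1426536} directly: let $n \le m$ be minimal such that $P \notin L(\calF[p^{n}]) \cdot \calF[p^\infty]$-span in the relevant sense — concretely, pick the largest power $p^{r-1}$ and show that the assumption forces $P$ to be fixed by the kernel of $\Gal(\overline K) \to \Aut(\calF[p^r])$, hence $P \in \calF[p^r] \subseteq \calF(L)$, contradicting that $P$ is not defined over $L$. The condition $r \neq 1$ when $p = 2$ is needed exactly to ensure $\Aut(\calF[p^r])$-kernel acts without the extra $\{\pm 1\}$ ambiguity, i.e. that $\calF[p^r] \to \calF[p^{r}]/\calF[p^{r-1}]$ is well-behaved; for $p=2,r=1$ the group $(\IZ/2)^h$ has automorphisms that are insufficient to pin down $P$.

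Concretely, here is the key step I would carry out in detail. Set $V := \calF[p^m]$, a finite Galois module, and let $H := \ker\bigl(\Gal(L) \to \Aut(V)\bigr)$ — equivalently $H$ fixes all torsion of order up to $p^m$. For $s \in H$ we trivially have $s(P) = P$. The subtlety is to handle $s \in \Gal(L) \setminus H$. One uses that $L$ already contains $\calF[p^r]$, so $\Gal(L)$ acts on $V = \calF[p^m]$ through the subgroup of $\Aut(V)$ fixing $\calF[p^r]$ pointwise; by the theory of $p$-divisible groups this action is via a subgroup of the congruence subgroup $1 + p^r M_h(\IZ/p^{m-r})$ inside $\GL_h(\IZ/p^m)$. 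If $P$ has order exactly $p^m$ with $m > r$, write $P$ as a basis-type vector; then $s(P) - P = (s - 1)(P)$ and $s - 1 \in p^r M_h$, so $(s-1)(P) \in \calF[p^{m-r}]$-worth of displacement scaled by $p^{?}$ — the point is that one can choose $s$ realizing $(s-1)(P)$ of exact order $p^{r}$ rather than being confined to $\calF[p^{r-1}]$, using openness/surjectivity of the Galois image onto the congruence subgroup. That realization step is the main obstacle: it requires that the image of $\Gal(L)$ in the automorphisms of $\calF[p^m]/\calF[p^{r-1}]$ is large enough to move $P$ by an element of precise order $p^r$, which is where one genuinely invokes the finite-height $p$-divisible hypothesis (so that $T_p\calF$ is free of finite rank over $\IZ_p$ with open Galois image, by Tate's theorem on $p$-divisible groups or the Galois-module structure of formal groups). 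Granting that, the assumption $s(P) - P \in \calF[p^{r-1}]$ for all $s$ is contradicted, completing the proof.

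I expect the openness/largeness of the Galois image on the quotient module — i.e. ruling out that every Galois translate of $P$ stays within $P + \calF[p^{r-1}]$ — to be the crux; everything else (the cocycle computation, the reduction to this statement, the role of the $p=2$ hypothesis) is formal. The cited \cite[Lemma 2]{MR1426536} presumably handles the abelian-variety case with the identical structure, and the claim here is that the argument goes through \emph{mutatis mutandis} once one replaces ``points of the abelian variety'' by ``points of the $p$-divisible formal Lie group'' and uses that $\calF[p^\infty] \cong (\IQ_p/\IZ_p)^h$ with the same Galois-image openness.
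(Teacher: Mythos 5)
There is a genuine gap, and it sits exactly where you locate ``the crux.'' Your argument ultimately rests on the claim that the image of $\Gal(K)$ (or $\Gal(L)$) in $\Aut(T_p\calF)\cong\GL_h(\IZ_p)$ is open, so that one can realize a Galois element moving $P$ by an element of exact order $p^r$. No such openness holds for general $p$-divisible formal Lie groups: for a Lubin--Tate formal group of height $h\geq 2$, or more generally in any CM situation, the Galois image is abelian and lands in the units of an order of a degree-$h$ extension of $\IQ_p$, which is nowhere near open in $\GL_h(\IZ_p)$. Tate's theorem on $p$-divisible groups gives full faithfulness of the Tate-module functor, not largeness of the image, so the ``realization step'' you flag as the main obstacle cannot be carried out by this route. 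A secondary issue: your contradiction hypothesis (``for every $s$ we have $s(P)-P\in\calF[p^{r-1}]$'') is strictly stronger than the actual negation of the conclusion, which also allows $s(P)-P$ to have order larger than $p^r$; the reduction to elements $s$ with $s(P)-P\in\calF[p^r]$ is itself part of what must be proved.

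The paper's proof needs no largeness of the Galois image at all. It takes $n$ minimal with $[p^n]P$ defined over $L$, so that by minimality \emph{some} $s_1\in\Gal(L)$ moves $[p^{n-1}]P$; this single element is the only Galois input. Setting $P_i=[p^{\,n-r-i+1}]P$, $s_i=s_1^{p^{i-1}}$ and $Q_i=s_i(P_i)-P_i$, one checks $Q_1\in\calF[p^r]\setminus\calF[p^{r-1}]$ directly and then proves $Q_{i+1}=Q_i$ by a telescoping sum $\sum_{j=0}^{p-1}s_i^j(R_i)$ in the formal group, where the second-difference term is multiplied by $p(p-1)/2$ and hence dies for odd $p$ (the hypothesis $r\neq 1$ when $p=2$ enters to make the analogous computation work, not because of any $\pm1$ ambiguity in automorphism groups). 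This is Boxall's elementary device, and it is precisely what lets the lemma apply to arbitrary finite-dimensional $p$-divisible formal groups where no open-image theorem is available. If you want to repair your write-up, you should abandon the congruence-subgroup/openness step and instead reproduce this constructive induction.
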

\begin{proof}
Let $n$ be minimal with the property that $[p^n]P\in L$; this is always possible by our assumptions. Also let $P_i:=[p^{n-r-i+1}]P$ and pick $s_1\in \Gal(L)$ such that $s_1([p^{n-1}]P)\neq[p^{n-1}]P$. Finally set $s_i=s_1^{p^{i-1}}$ and $Q_i:=s_i(P_i)-(P_i)$, noting that addition and subtraction are using the formal group law throughout the proof. We prove by induction on $i$ that 
$$Q_i\in \calF[p^r]\setminus \calF[p^{r-1}].$$
It then follows that $s_{n-r+1}$ proves the Lemma, since $P_{n-r+1}=P$. The statement holds for $i=1$, as $[p^r]Q_1=s_1([p^n]P)-[p^n]P=0$ and $[p^{r-1}]Q_1=s_1([p^{n-1}]P)-[p^{n-1}]P\neq 0$  by construction.
Let now $i>1$ and assume the claim holds for $Q_i$. First observe that $s_i^j(P_i)-P_i=[j]Q_i$ for $j\geq1$: this is true for $j=1$ and follows easily by induction since 
$$s_i^{j+1}(P_i)-P_i=s_i([j](Q_i))+s_i(P_i)-P_i=[j+1]Q_i, $$
using that $Q_i\in \calF[p^r]$ and is therefore fixed by $s_i$ by induction hypothesis.
Taking $j=p$ yields $[p]Q_{i+1}=[p](s_{i+1}(P_{i+1})-P_{i+1})=s_i^p([p]P_{i+1})-[p]P_{i+1}=s_i^p(P_i)-P_i=[p]Q_i\in \calF[p^{r-1}]$ and therefore $Q_{i+1}\in \calF[p^r]$. Now set $R_i:=s_i(P_{i+1})-P_{i+1}$ and $S_i:=s_i(R_i)-R_i$ so that $[p]R_i=Q_i$ and $[p]S_i=s_i(Q_i)-Q_i=0$. By induction on $j$ one obtains $s_i^j(R_i)=R_i+[j]S_i$. Summing for $p$ odd one has (all sums using the formal group law):
$$\sum_{j=0}^{p-1}s_i^j(R_i)=[p]R_i+[p(p-1)2^{-1}]S_i=[p]R_i=Q_i$$
and also 
$$\sum_{j=0}^{p-1}s_i^j(R_i)=\sum_{j=0}^{p-1}s_i^j(s_i(P_{i+1})-P_{i+1})=s_i^p(P_{i+1})-P_{i+1}=Q_{i+1}$$
and therefore $Q_i=Q_{i+1}$, proving the claim. Similarly if $p=2$. 

\end{proof}

\begin{lemma}\label{translateclose}
Let $\calF$ be a formal group over $R$ and let $X$ be a formal subscheme with $d(\zeta, X)\leq \varepsilon$ for some torsion point $\zeta\in \calF[p^\infty]$. Then for any $\sigma\in \Gal(K)$ we have
$$d(\zeta, T_{\sigma(\zeta)-\zeta}X)\leq \varepsilon.$$
\end{lemma}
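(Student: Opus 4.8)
The plan is to reduce everything to the single identity $F(\zeta,\,\sigma(\zeta)-_\calF\zeta)=\sigma(\zeta)$ in the formal group, combined with the fact that $\sigma$ extends to a continuous isometric automorphism of $\IC_p$ fixing $K$. First I would write $X=\Spf(R[[X_1,\ldots,X_n]]/I)$ with $I=(\phi_1,\ldots,\phi_d)$, and set $Q:=\sigma(\zeta)-_\calF\zeta\in\calF[p^\infty]$, which lies in the unit ball $\im_{\IC_p}^n$ since it is a torsion point of the formal group. By Lemma \ref{lemma:translate} the ideal of definition of $T_QX$ is generated by the power series $\phi_j(F(X,Q))$ for $1\leq j\leq d$, which have coefficients in $R[Q]\subset\calO_{\IC_p}$; hence, recalling that the distance may be computed on the generators of the defining ideal,
$$d(\zeta,T_QX)=\max_{1\leq j\leq d}\bigl|\phi_j(F(\zeta,Q))\bigr|_p.$$

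Next I would invoke the formal group axioms: by commutativity and associativity of $F$, one has $F(\zeta,Q)=\zeta+_\calF(\sigma(\zeta)-_\calF\zeta)=\sigma(\zeta)$, so the right-hand side above equals $\max_j|\phi_j(\sigma(\zeta))|_p$. Since $\sigma\in\Gal(K)$ fixes $K$, it fixes the coefficients of each $\phi_j\in R[[X_1,\ldots,X_n]]$; being continuous on $\IC_p$, it commutes with the convergent evaluation at $\zeta\in\im_{\IC_p}^n$, giving $\phi_j(\sigma(\zeta))=\sigma(\phi_j(\zeta))$; and being an isometry it satisfies $|\sigma(\phi_j(\zeta))|_p=|\phi_j(\zeta)|_p$. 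Combining these, $d(\zeta,T_QX)=\max_j|\phi_j(\zeta)|_p=d(\zeta,X)\leq\varepsilon$, as desired.

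The proof is essentially formal, so there is no serious obstacle; the only points requiring a little care are that $Q$ lies in the domain of convergence (automatic for a torsion point of $\calF$), so that $F(X,Q)$ and its composites $\phi_j(F(X,Q))$ are genuine power series with bounded coefficients over $\calO_{\IC_p}$, and that $\sigma$, as a continuous automorphism of $\IC_p/K$, may legitimately be moved past the infinite sum defining $\phi_j(\zeta)$. Once these are in place, the identity $F(\zeta,\sigma(\zeta)-_\calF\zeta)=\sigma(\zeta)$ does all the work.
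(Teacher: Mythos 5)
Your proof is correct and follows essentially the same route as the paper: both reduce the claim to showing $d(\sigma(\zeta),X)\leq\varepsilon$ via the identity $F(\zeta,\sigma(\zeta)-_\calF\zeta)=\sigma(\zeta)$, then use Galois equivariance $\phi(\sigma(\zeta))=\sigma(\phi(\zeta))$ together with the fact that $\sigma$ acts by continuous (isometric) automorphisms. Your write-up is just a more explicit version of the paper's argument, spelling out the generators of the ideal of $T_QX$ from Lemma \ref{lemma:translate} and the convergence points the paper leaves implicit.
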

\begin{proof}
Writing $\zeta=\sigma(\zeta)+(\zeta-\sigma(\zeta))$, we see that it suffices to show that $d(\sigma(\zeta), X)\leq \varepsilon$. But for any $\phi$ in the ideal of definition of $X$, we have that $\phi(\sigma(\zeta))=\sigma(\phi(\zeta))$ and by assumption $\vert\phi(\zeta)\vert_p\leq \varepsilon$. Since $\sigma$ acts by continuous automorphisms it follows that $\vert\sigma(\phi(\zeta))\vert_p\leq \varepsilon$, as desired.
\end{proof}
We may now prove the main theorem.

\begin{proof}[Proof of Theorem \ref{maintheoremformal}]

We proceed by induction on $\dim(X)$ since all of the rings considered are Noetherian. Without loss of generality assume $X$ is irreducible and passes through the identity of $\calF$. We also assume $\Stab(X)(\overline{K})\cap \calF[p^\infty]$ is finite since otherwise the result holds by Lemma \ref{lemma:infinitestab}. Fix $r$ large enough so that $\Stab(X)(\IC_p)\cap\calF[p^\infty]\subseteq \calF[p^{r-1}]$.
By Lemma \ref{Boxallformal}, for any $\zeta\not\in \calF[p^{r}]$ there exists $\sigma \in \Gal(F)$ such that $\sigma(\zeta)-\zeta\in \calF[p^{r}]\setminus \calF[p^{r-1}]$, with subtraction in the formal group. For any $\varepsilon>0$, recall we denote $X(\varepsilon)=\{\zeta\in \calF[p^\infty]\vert d(\zeta,X)\leq \varepsilon\}$. Then by Lemma \ref{translateclose}, we may for all $\varepsilon>0$ write 
$$X(\varepsilon)\subseteq \calF[p^{r-1}]\cup\bigcup_{Q\in \calF[p^{r}]\setminus \calF[p^{r-1}]} (X\cap T_{Q}X)(\varepsilon).$$
Since this is a finite union it suffices to prove the theorem for each $X\cap T_{Q}X$. Since $Q\not\in\Stab(X)$ the dimension of each irreducible component of $X\cap T_{Q}X$ is strictly less than $\dim(X)$. Moreover, it follows from Lemma \ref{lemma:translate} that each of the $X\cap T_{Q}X$ is a closed subscheme over some Noetherian local base. We may therefore proceed by induction and reduce to the case when $X$ is a point, where the result holds since the torsion points on $\calF(\IC_p)$ are discrete in the $p$-adic topology. 
\end{proof}
We conclude this section by deducing the application to nonarchimedean dynamical systems in the sense of Lubin \cite{MR1310863}:

\begin{corollary}
Let $\calF$ and $\calG$ be two finite height one-dimensional formal groups over $R$. Let $h\in Y\cdot R[[Y]]$ denote a power series satisfying $h'(0)\neq 0$. If for all $\varepsilon>0$ there are infinitely many pairs $(\zeta,\xi)\in \calF[p^\infty]\times \calG[p^\infty]$ such that $\vert h(\zeta)-\xi\vert_p<\varepsilon$, then $h\in\Hom(\calF,\calG)$.
\end{corollary}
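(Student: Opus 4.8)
The plan is to deduce this corollary from Theorem~\ref{maintheoremformal} applied to the graph of $h$ inside the product formal group $\calF\times\calG$. First I would form the $2$-dimensional formal group $\calH:=\calF\times\calG=\Spf(R[[Y,Z]])$, which is again $p$-divisible of finite height since $\calF$ and $\calG$ are, so that $\calH[p^\infty]=\calF[p^\infty]\times\calG[p^\infty]$ exhausts the torsion. Let $X\hookrightarrow\calH$ be the closed formal subscheme cut out by the single power series $Z-h(Y)$; since $h\in Y\cdot R[[Y]]$ this lies in the maximal ideal and defines a genuine closed formal subscheme passing through the identity, of dimension~$1$. The hypothesis that for every $\varepsilon>0$ there are infinitely many pairs $(\zeta,\xi)$ with $|h(\zeta)-\xi|_p<\varepsilon$ says exactly that $X(\varepsilon)$ is infinite for all $\varepsilon>0$, where the distance is computed with respect to the generator $Z-h(Y)$ of the ideal of definition. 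Hence alternative~(1) of Theorem~\ref{maintheoremformal} fails, and we are in case~(2): after passing to a finite extension of $R$, $X$ contains the translate by a torsion point of a positive-dimensional formal subgroup of $\calH$. Because $\dim X=1$, this forces $X$ itself (geometrically) to be a translate $T_{Q_0}\Gamma$ of a one-dimensional formal subgroup $\Gamma\subseteq\calH$ by a torsion point $Q_0\in\calH[p^\infty]$.

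Next I would extract from this the statement $h\in\Hom(\calF,\calG)$. A one-dimensional formal subgroup $\Gamma$ of $\calF\times\calG$ on which the first projection is an isomorphism near the origin (which holds because the tangent line to $X=\{Z=h(Y)\}$ at the identity is $\{Z=h'(0)Y\}$ with $h'(0)\neq0$) is precisely the graph of a homomorphism $g\in\Hom(\calF,\calG)$: indeed $\Gamma$, being a smooth one-dimensional formal subgroup with surjective-on-tangent-spaces projection to $\calF$, is of the form $\{Z=g(Y)\}$ for some $g\in Y\cdot R'[[Y]]$, and the condition that $\{Z=g(Y)\}$ be stable under the group law of $\calF\times\calG$ is exactly the homomorphism identity $g(F_\calF(Y_1,Y_2))=F_\calG(g(Y_1),g(Y_2))$. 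It remains to see that the torsion translation $Q_0$ is trivial, i.e.\ that $X$ is not merely a translate of a subgroup but a subgroup. Since $X$ passes through the identity (as $h(0)=0$), the identity $e\in\calH$ lies on $X=T_{Q_0}\Gamma$, so $Q_0\in\Gamma$ and therefore $T_{Q_0}\Gamma=\Gamma$; thus $X=\Gamma$ over the finite extension, and comparing coefficients (both $h$ and $g$ have coefficients determined by the same closed subscheme) gives $h=g\in\Hom(\calF,\calG)$.

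I should also record why the ``close'' hypothesis, not just exact vanishing, still lands us in case~(2): this is already built into Theorem~\ref{maintheoremformal} via the distance function, so no extra work is needed beyond translating $|h(\zeta)-\xi|_p<\varepsilon$ into $d((\zeta,\xi),X)<\varepsilon$. Finally, the reduction to preperiodic points in the form stated in the introduction (Corollary~\ref{cor:dynamic}) follows from Lubin's observation \cite{MR1310863} that for a finite height one-dimensional formal group $\calF$ over $R$ and an integer $m\geq2$, the preperiodic points of $[m]\in Y\cdot R[[Y]]$ are exactly the torsion points $\calF[p^\infty]$; so $\Preper(\calF)=\calF[p^\infty]$ and $\Preper(\calG)=\calG[p^\infty]$ and the two formulations coincide verbatim.

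The main obstacle I anticipate is the second paragraph: making rigorous the passage from ``$X$ contains a positive-dimensional formal subgroup translate'' (the output of Theorem~\ref{maintheoremformal}, which a priori only asserts \emph{containment}) to the identification of $X$ with the graph of a homomorphism. One must argue that a one-dimensional irreducible $X$ containing a one-dimensional formal subgroup scheme coincides with it (a dimension and irreducibility argument over the Noetherian local ring, using that the subgroup is reduced and $X$ may be taken irreducible), and separately verify that the relevant subgroup is indeed a graph over $\calF$ rather than, say, $\{0\}\times\calG$ — here the hypothesis $h'(0)\neq0$ is exactly what rules out the degenerate possibility, since it pins down the tangent direction of $X$ at the origin. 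The homomorphism identity itself is then the same ``vanishing along a dense set of torsion points forces a formal identity'' argument already used in the proof of Lemma~\ref{lemma:infinitestab}.
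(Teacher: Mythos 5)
Your proposal is correct and follows essentially the same route as the paper: form the graph $X=\Spf(R[[Y_1,Y_2]]/(h(Y_1)-Y_2))$ in $\calF\times\calG$, apply Theorem \ref{maintheoremformal}, and use irreducibility of $X$ (plus the fact that it passes through the identity) to conclude that $X$ is itself a formal subgroup. The only difference is the final step: you read off the homomorphism identity directly from the stability of the graph under the product group law, whereas the paper instead extracts the commutation relation $h\circ[2]_\calF=[2]_\calG\circ h$ and invokes Lemma 4.4 of Berger \cite{2018arXiv181105824B}; both are valid, and your version is slightly more self-contained.
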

\begin{proof}
Assume that for all $\varepsilon>0$ we have $\vert h(\zeta)-\xi\vert_p<\varepsilon$ for infinitely many pairs. Consider the formal subscheme $X=\Spf(R[[Y_1,Y_2]]/(h(Y_1)-Y_2)$ of the two-dimensional $p$-divisible formal Lie group $\calF\times\calG$. Then by Theorem \ref{maintheoremformal}, $X$ contains a positive dimensional formal subgroup and since $X$ is irreducible $X$ is a formal subgroup. It now follows that $h\in\Hom(\calF,\calG)$. For example, let $f(Y_1):=[2]_\calF(Y_1)$ and $g(Y_2):=[2]_\calG(Y_2)$ denote the duplication maps on $\calF$ and $\calG$ respectively. Since $X$ is stable under multiplication by $2$, it follows that $h\circ f= g\circ h$. This commutation relation implies 
$h\in\Hom(\calF,\calG)$ by Lemma 4.4 of \cite{2018arXiv181105824B}. 
\end{proof}

\section{Some Remarks}\label{section:remarks}
Motivated by the classical Manin--Mumford and Tate--Voloch Conjectures and by the cases when $\calF$ is a product of one-dimensional formal groups studied in \cite{Serban:2016aa}, it seems reasonable to expect that a stronger result holds and that, for $\varepsilon>0$ small enough, one can exhaust all torsion in $X(\varepsilon)$ by a finite union of translates of formal subgroups contained in $X$ by torsion points. We record this here:
\begin{question}\label{strongconjectureformal}
Determine for which $p$-divisible formal Lie groups $\calF:=\Spf(R[[X_1,\ldots, X_n]])$ over $R$ one has that for any closed affine formal subscheme $X\hookrightarrow \calF$ over $R$ exactly one of the following two situations occurs: 
\begin{enumerate}
\item There exists $\varepsilon>0$ such that there are only finitely many torsion points $Q\in\calF[p^\infty]$ with $d(X,Q)<\varepsilon$.
\item There are infinitely many torsion points of $\calF$ on $X(\IC_p)$. Moreover, there exists a finite union of positive dimensional formal subgroups $\bigcup_{i=1}^NH_i$ over a finite extension of $R$ and torsion points $\xi_i\in \calF[p^\infty]$ such that the translates $T_{\xi_i}H_i$ are contained in $X$ and the set 
$$S_\varepsilon:=\{Q\in \calF[p^\infty]\setminus \left(\cup_{i=1}^NT_{\xi_i}H_i(\IC_p)\cap\calF[p^\infty]\right)\vert d(X,Q)< \varepsilon\}$$
is finite for $\varepsilon$ small enough.
\end{enumerate}
\end{question}
For $\calF=\widehat{\IG}^n_m$ or a product of one-dimensional Lubin--Tate formal groups, this result is proved in 
\cite{Serban:2016aa}. Such a result could be established more generally via our strategy if one were able to reduce to the case of finite stabilizers by passing to suitable quotients by formal subgroups: in the classical setting where the ambient group is an abelian variety $G$, if the stabilizer is infinite it follows that the connected component passing through the identity $\Stab^0(X)$ is an abelian subvariety of $G$. One can then form the quotients $X/\Stab^0(X)\to G/\Stab^0(X)$ and in this way reduce to the case of finite stabilizer. However, we have not succeeded so far in carrying through this strategy for arbitrary $p$-divisible formal Lie groups. For instance, in general ther are obstacles to taking suitable quotients in the category of $p$-divisible formal Lie groups. \\
To illustrate this point, recall that formal Lie groups over $R$ can be studied in terms of various semi-linear algebra constructions. We mention Cartier theory and derive a couple consequences thereof here, giving T. Zink's book \cite{MR767090} as a reference. 
\begin{remark}\label{remark:pdiv}
Alternatively, one could view formal groups over $R$ as connected $p$-divisible groups and the latter objects are well-studied. To name a few results, building on work of J-M. Fontaine and others, C. Breuil showed (for odd $p$, see \cite[Th\' eor\`eme 1.4.]{MR1804530}) that $p$-divisble groups over the ring $R$ of integers of a discretely valued complete extension $K$ of $\IQ_p$ with perfect residue field are equivalent to the category of $\Gal(K)$-stable $\mathbb{Z}_p$-lattices which are crystalline representations of $\Gal(K)$ with Hodge-Tate weights in $\{0,1\}$. Over $\calO_{{\IC}_p}$, we also have a classification for $p$-divisible groups due to P. Scholze and J. Weinstein \cite[Thm B]{ScholzeWeinstein}. 
\end{remark}

Let $k$ denote a commutative ring with unit. Viewing formal groups as functors $G:\Nil_k\to \Ab$, the first main theorem of Cartier theory gives a Yoneda-type isomorphism 
$$\Hom(\Lambda,G)\cong G(k[[X]]),$$
where the formal group $\Lambda$ is defined by $\Lambda(N)=1+tk[t]\otimes_k N$ for nilpotent $k$-algebras $N$. The group $G(k[[X]])$ is called the group of curves on $G$. Setting $\Cart(k):=\End(\Lambda)^{op}$, it follows that the ring $\Cart(k)$ is isomorphic to the curves on $\Lambda$ and that a formal group gives rise to a module over the Cartier ring $\Cart(k)$. \par
The second main theorem of Cartier theory gives a characterization of modules arising in this way.
We state its local version when $k$ is a $\IZ_{(p)}$-algebra : conjugating $\Cart(k)$ with a suitable idempotent, we obtain the local Cartier ring $\Cart_p(k)$. It can be identified with the endomorphism ring of the formal Witt vectors and can be described explicitly in terms of topological generators $F,V$ and $[a]$ for $a\in k$. These satisfy $FV=p$ and a number of commutation relations (see \cite[4.17.]{MR767090}). 
The topology coming from the filtration $$\Fil^n(\Cart_p(k)):=V^n\Cart_p(k)$$
turns $\Cart_p(k)$ into a complete, separated topological ring.  As before a formal group gives rise to a (left) $\Cart_p(k)$-module. To characterize modules arising in this way, call a left $\Cart_p(k)$-module $M$ \emph{$V$-flat} if $M/VM$ is a flat $k$-module and \emph{$V$-reduced} if $V:M\to M$ is injective and $M$ is $V$-adically complete. We then have the local version of the second main theorem: 
\begin{theorem}[Thm. 4.23 in \cite{MR767090}]
There is an equivalence of categories:
$$\{\text{smooth formal groups over }k\}\leftrightarrow\{V\text{-flat and }V\text{-reduced left }\Cart_p(k)\text{-modules}\}$$
given by sending a formal group $G$ to the $\Cart_p(k)$-module $M_G$ of $p$-typical curves in $G(k[[X]])$. Moreover, the Lie algebra of the formal Lie group is isomorphic to $M_G/VM_G$. \end{theorem}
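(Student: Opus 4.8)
The plan is to follow the classical development of Cartier theory: set up the curve functor, deduce full faithfulness from the first main theorem quoted above, and then establish essential surjectivity by a structure theory of $V$-reduced modules.

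First I would make the functor precise. For a smooth formal group $G$ the group of curves $\mathcal{C}(G) := G(X\,k[[X]])$ carries the commuting operators $[a]\gamma(X) = \gamma(aX)$ (homotheties), $V_n\gamma(X) = \gamma(X^n)$ (Verschiebungen) and the Frobenii $F_n$, together with addition coming from the group law of $G$; by the first main theorem recalled in the excerpt ($\Hom(\Lambda,G)\cong G(k[[X]])$, $\Cart(k) = \End(\Lambda)^{\mathrm{op}}$) this makes $\mathcal{C}(G)$ a left $\Cart(k)$-module. Since $k$ is a $\mathbb{Z}_{(p)}$-algebra there is the canonical idempotent $\varepsilon\in\Cart(k)$ cutting out the submodule $M_G := \varepsilon\,\mathcal{C}(G)$ of $p$-typical curves, with $\varepsilon\,\Cart(k)\,\varepsilon = \Cart_p(k)$; this Morita-type passage is the conjugation mentioned just before the theorem and reduces the local statement to the global one. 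I would then check that $M_G$ is $V$-reduced ($V = V_p$ is injective because $\gamma\mapsto\gamma(X^p)$ is, and $V$-adic completeness of $M_G$ follows from $X$-adic completeness of $G(k[[X]])$), and record the identification $M_G/VM_G\cong\mathrm{Lie}(G)$ by sending a curve to its linear coefficient $\gamma'(0)$ and checking that the kernel is spanned by curves in the image of $V$. Since $G$ is formally smooth, $\mathrm{Lie}(G)$ is a flat $k$-module, so $M_G$ is $V$-flat, and this identification also gives the last assertion of the theorem.

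For full faithfulness, a homomorphism $f:G\to H$ induces a $\Cart_p(k)$-linear map $M_G\to M_H$ by functoriality of curves; conversely, given $\psi: M_G\to M_H$ linear over $\Cart_p(k)$, one reconstructs $f$ using that a smooth formal group is determined as a functor on $\Nil_k$ by its curves together with the operator action. Concretely, one expands points of $G(N)$ over a nilpotent $k$-algebra $N$ in terms of curves evaluated at nilpotent parameters, transports them via $\psi$, and uses that $\psi$ commutes with all $[a]$, $F$, $V$ to see that the resulting assignment is a well-defined group homomorphism $G\to H$. This is the step where the first main theorem does the real work.

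The substantive part is essential surjectivity. Given a $V$-reduced, $V$-flat left $\Cart_p(k)$-module $M$, I would first prove the structure lemma that flatness of $M/VM$ allows one to choose lifts $\gamma_i\in M$ (indexed by a basis of $M/VM$, the general flat case reducing to the free one) so that every element of $M$ has a unique $V$-adically convergent expansion $\sum_i\sum_{m\geq 0}V^m[c_{m,i}]\gamma_i$ with $c_{m,i}\in k$; in particular the $F$-action is encoded by structure constants $F\gamma_i = \sum_j\sum_m V^m[c^{(F)}_{m,ij}]\gamma_j$. The commutation relations of $\Cart_p(k)$ ($FV = p$, $F[a] = [a^p]F$, $[a]V = V[a^p]$, plus $V$-adic continuity) then translate exactly into the conditions needed for these constants to assemble into a genuine formal group law over $k$; taking $G$ to be the corresponding formal group, one checks $M_G\cong M$ as $\Cart_p(k)$-modules compatibly with the identifications above. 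I expect this reconstruction to be the main obstacle: verifying that a $V$-reduced, $V$-flat module yields a \emph{legitimate} (commutative, associative) formal group law, rather than a formal scheme with a defective operation, is precisely where the full force of the commutation relations and the hypothesis that $k$ is a $\mathbb{Z}_{(p)}$-algebra enters, and it is $V$-flatness that prevents the reconstructed object from being pathological, e.g.\ from having the wrong tangent space.
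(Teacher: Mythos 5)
The paper does not prove this statement: it is quoted verbatim as Theorem 4.23 of Zink's book \cite{MR767090}, so there is no internal proof to compare against. Judged against the standard proof in the literature, your outline is essentially correct: the passage from global to $p$-typical curves via the idempotent, the verification that $M_G$ is $V$-reduced with $M_G/VM_G\cong\mathrm{Lie}(G)$ (whence $V$-flatness from formal smoothness), and full faithfulness via the first main theorem are all as in Zink.

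The one place where your route genuinely diverges from Zink's is essential surjectivity, and it is worth flagging because it is where all the work lives. You propose the Lazard--Hazewinkel-style reconstruction: pick a $V$-basis $(\gamma_i)$ of $M$ lifting a basis of $M/VM$, read off structure constants from $F\gamma_i$, and verify directly that these assemble into a (curvilinear) formal group law. This works, but the verification of associativity and commutativity from the relations of $\Cart_p(k)$ is lengthy and, in the non-free flat case, the reduction you wave at (``the general flat case reducing to the free one'') needs care, since a flat module is only a filtered colimit of free ones. Zink instead defines the inverse functor by a reduced tensor product, $G_M(N):=\hat{W}(N)\,\hat{\otimes}_{\Cart_p(k)}M$, with $\hat{W}(N)$ viewed as a right module; the group structure is then automatic, and the content shifts to proving that $G_M$ is representable and formally smooth, which is exactly where $V$-flatness is used. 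The two approaches buy different things: yours makes the formal group law explicit in coordinates, Zink's avoids any direct verification of group axioms and handles arbitrary flat $M/VM$ uniformly. As a blind reconstruction your proposal is a sound sketch of a correct proof, with the caveat that the hardest step is only described, not carried out.
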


Concerning quotients of formal groups, consider therefore $M_1\to M_2$ a homomorphism of $V$-flat, $V$-reduced $\Cart_p(k)$-modules and suppose the induced map $M_1/VM_1\to M_2/VM_2$ is injective. 
Then so is $M_1\to M_2$, and letting $M$ denote the quotient module, by the Snake Lemma $V:M\to M$ is an injective left $\Cart_p(k)$-module homomorphism and for $n\geq 1$ we have short exact sequences 
$$0\to M_1/V^nM_1\to M_2/V^nM_2\to M/V^nM\to 0.$$
This shows that the quotient is $V$-reduced. However, taking $n=1$ above shows that the quotient module $M$ is not necessarily $V$-flat. Thus there seems to be no way of taking quotients of smooth formal groups in this generality. \par
Using Cartier modules, it is also not hard to show that lifting formal Lie groups from a characteristic $p$ field $k$ to a formal group over the Witt vectors $W(k)$ is unobstructed \cite[Theorem 4.46]{MR767090}. Over such fields $k$, $p$-divisible formal groups can be studied via their Newton polygon and for abelian schemes these satisfy a restrictive symmetry condition. Hence the existence of lifts illustrates that our results apply to more than formal groups of abelian schemes, so that we have indeed uncovered new, genuinely $p$-adic, Manin--Mumford type results.\par
\begin{remark}
Lifting morphisms of formal groups on the other hand is more delicate. For instance over $\overline{\mathbb{F}}_p$, a $p$-divisible formal group is isogenous to a product of isoclinic groups corresponding to each slope of its Newton polygon, but the decomposition up to isogeny cannot be lifted in general. 
\end{remark}
This serves to highlight another hindrance in the formal setting: for abelian varieties, one could alternatively use Poincar\'e reducibility to split off isogeny complements of abelian subvarieties and then similarly induct on dimensions in the infinite stabilizer case. However, in the absence of suitable polarizations, even after base-changing to an extension of $R$  one may not have an isogeny complement for a formal group. For instance over $\calO_{{\IC}_p}$, from the Scholze-Weinstein classification \cite[Thm B]{ScholzeWeinstein} one can identify $p$-divisible groups $G$ up to isogeny with finite dimensional $\IQ_p$-vector spaces (by considering Tate modules $T_p(G)\otimes \IQ_p$), together with a filtration after tensoring with $\mathbb{C}_p$; this does not form a semisimple category. It would be interesting to see if these somewhat technical obstacles can be surmounted or if a new strategy can shed light on Question \ref{strongconjectureformal}.

\section{Rigid analytic multiplicative Manin--Mumford}\label{sec:rigid}

We now seek to establish similar results for $p$-adic analytic functions when the special points consist of all of the torsion points of a commutative algebraic group, not just the $p$-primary part. To that end, we consider power series converging on the ``closed'' $p$-adic unit polydisk $\calO_{\IC_p}^n$, namely we work with functions in the Tate algebra $T_n(K):=\{f=\sum_Ja_JX^J\in K[[X_1,\ldots, X_n]]\vert  a_J\to 0 \}$ for multi-indices $J\in\IN^n$ and look for a $p$-adic strengthening of the classical results of a rigid analytic flavor.\\
The so-called \emph{multiplicative} case of the Manin--Mumford Conjecture was already considered by S. Lang in \cite{MR0190146}, and states that an irreducible subvariety $V\hookrightarrow \IG_m^n(\IC)$ of the multiplicative group contains a Zariski-dense set of torsion points if and only if $V$ is a translate by a torsion point of a subtorus. When $n=2$, this amounts to the following result about polynomial functions, with proofs due to Y. Ihara, J.-P. Serre and J. Tate:

\begin{lemma}[\cite{MR0190146}, p. 230]
Let $C$ be an absolutely irreducible plane curve given by the zero set of a polynomial $f(X,Y)=0$. If $C$ passes through the multiplicative identity and if for infinitely many roots of unity $(\zeta,\xi)$ we have
$$f(\zeta, \xi)=0,$$
then up to a constant $f(X,Y)=X^m-Y^l$ or $f(X,Y)=X^mY^l-1$ for a pair of nonnegative integers $(m,l)\neq (0,0)$. 
\end{lemma}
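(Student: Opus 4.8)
The plan is to exploit the fact that the roots of unity on $C$ are constrained by both the polynomial equation and the Galois action, in the spirit of the Ihara–Serre–Tate argument.

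\medskip

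\textbf{Approach.} First I would fix a root of unity $(\zeta,\xi)$ on $C$ and, by enlarging the field, consider the cyclotomic field $\mathbb{Q}(\zeta_m)$ it generates, where $m$ is the lcm of the orders of $\zeta$ and $\xi$. The Galois group $\Gal(\mathbb{Q}(\zeta_m)/\mathbb{Q}) \cong (\IZ/m\IZ)^\times$ acts on such points by sending $(\zeta,\xi)$ to $(\zeta^a,\xi^a)$ for $a$ coprime to $m$. Since $C$ is defined over $\mathbb{Q}$ (or after a constant twist, over a number field), if $(\zeta,\xi)$ lies on $C$ then so do all the conjugates $(\zeta^a,\xi^a)$. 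The key point is that for infinitely many roots of unity on $C$, the orders $m$ are unbounded, so one obtains points $(\zeta^a,\xi^a)$ on $C$ for many values of $a$ simultaneously, and in particular for $a$ in a long arithmetic-progression-like set.

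\medskip

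\textbf{Key steps in order.} (1) Reduce to showing that $f$ is, up to a monomial and a constant, divisible by a binomial $X^{m_0}Y^{l_0} - \zeta_0$ with $\zeta_0$ a root of unity; irreducibility and passing through the identity then force $\zeta_0 = 1$ and pin down the two stated shapes $X^m - Y^l$ and $X^m Y^l - 1$. (2) Use a pigeonhole/Bézout bound: the curve $C$ has bounded degree $D = \deg f$, so it can meet any other curve $C'$ of degree $\le D$ in at most $D^2$ points unless they share a component. (3) For a root of unity point $(\zeta,\xi)$ of large order $m$, pick a well-chosen $a \equiv 1 \pmod{m/\ell}$ type element (or more simply the Frobenius-like substitution $X \mapsto X^p$, $Y \mapsto Y^p$ for a prime $p \nmid m$): then $(\zeta^p, \xi^p)$ lies on $C$ and also on the curve $C^{(p)}$ obtained by the substitution. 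If infinitely many such points occur, $C$ and its image under $(X,Y)\mapsto(X^p,Y^p)$ must share a component, hence (by irreducibility) $C$ is stable under this substitution up to the obvious scaling. (4) Translate the stability of $C$ under enough such substitutions into the statement that the support of $f$ (its Newton polygon, the set of exponent vectors appearing) is constrained to lie on a single line through a vertex — this is where one forces $f$ to be essentially a binomial. (5) Clean up the constants using that $C$ passes through $(1,1)$.

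\medskip

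\textbf{Main obstacle.} The delicate part is step (3)–(4): making rigorous that "infinitely many roots of unity on $C$" produces enough Galois-translates lying on $C$ to conclude $C$ is stable under a Frobenius-type substitution, rather than merely that individual translates happen to land on $C$. One must control the orders $m$ — showing they are unbounded is easy, but one needs that for a positive density of primes $p$ (or a suitable choice of $p$ avoiding the finitely many ramified or exceptional ones) the substituted curve $C^{(p)}$ genuinely shares a component with $C$, and this requires a counting argument balancing the number of available torsion points against the Bézout bound $D^2$. Once one substitution-stability is in hand, extracting the binomial shape is essentially linear algebra on the exponent lattice, and the final identification of $f$ up to a constant as $X^m - Y^l$ or $X^m Y^l - 1$ follows from irreducibility together with the normalization $f(1,1)=0$.
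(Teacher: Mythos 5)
The paper itself gives no proof of this lemma (it is quoted from Lang and attributed to Ihara, Serre and Tate), so your sketch must stand on its own; it follows the classical global route, which is the right one. Steps (2)--(3) are essentially sound, with two points to make explicit. First, the substituted curve $C^{(p)}=\{f(X^p,Y^p)=0\}$ has degree $pD$, so the B\'ezout bound is $pD^2$, not $D^2$. Second, one should first observe that $C$, having infinitely many algebraic points, is defined over a number field $k$ of some degree $d$; a torsion point of exact order $N$ on $C$ then contributes at least $\varphi(N)/d$ Galois conjugates to $C\cap C^{(p)}$ for any prime $p\nmid N$, and since there is always such a prime with $p=O(\log N)$, a single torsion point of sufficiently large order already forces $C\subseteq C^{(p)}$, i.e.\ $[p](C)\subseteq C$. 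No density-of-primes or "positive proportion of $p$" argument is needed, so the obstacle you flag in (3) is more easily dispatched than you suggest.

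The genuine gap is step (4). Stability of $C$ under $(X,Y)\mapsto(X^p,Y^p)$ is not the same as $f$ having an infinite multiplicative stabilizer, and it does not by itself impose a linear constraint on the support of $f$: the exponent-lattice argument you invoke (compare Lemmas \ref{lemma:relation} and \ref{lemma:Lambda} in this paper) needs identities of the form $f(\alpha X,\beta Y)=\lambda f(X,Y)$ coming from stabilizer elements, whereas what you have is only the divisibility $f(X,Y)\mid f(X^p,Y^p)$, a dynamical condition on $C$ rather than a symmetry of $f$. The standard way to close this is analytic, not combinatorial: pass to logarithmic coordinates at the identity, where $[p]$ acts by scalar multiplication by $p$; a branch of an analytic curve through the origin stable under this scaling must be a line (in a Puiseux parametrization $v=\sum_i c_iu^{k_i/e}$, invariance forces $c_i=0$ unless $k_i=e$), and an algebraic curve containing a Zariski-dense subset of a one-parameter subgroup is that subgroup's closure, whence $f$ is the irreducible binomial cutting out $\{X^m=Y^l\}$ or $\{X^mY^l=1\}$, and $f(1,1)=0$ fixes the constant. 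This is exactly the role played by Lemma \ref{lemma:stable} in the paper's $p$-adic generalization (Proposition \ref{prop:rigidLang} via Theorem \ref{thm:rigid}), and it is the piece your outline is missing; without it, "linear algebra on the exponent lattice" does not yield the binomial form.
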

We essentially consider the same questions replacing polynomials $f$ by analytic functions in $T_n(K)$. In particular, we obtain the following generalization:
\begin{prop}\label{prop:rigidLang}
Consider a one-dimensional affinoid $K$-algebra $T_2(K)/(f)$ for $f$ some irreducible two-variable power series in $T_2(K)$ passing through the identity. If for infinitely many roots of unity $(\zeta,\xi)$ we have 
$$f(\zeta,\xi)=0,$$
then, up to a unit in $T_2(K)$, $f$ must be a polynomial $f(X,Y)=X^m-Y^l$ or $f(X,Y)=X^mY^l-1$ for a pair of nonnegative integers $(m,l)\neq (0,0)$. 
\end{prop}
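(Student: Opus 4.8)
The plan is to reduce to the classical statement just recalled. Every torsion point of $\IG_m^2$ is defined over $\overline{\IQ}\subset\IC_p$, so let $S\subset Z(f)(\IC_p)$ be the infinite set of torsion zeros of $f$ and $W:=\overline{S}^{\,\mathrm{Zar}}\subseteq\IG_m^2$ its Zariski closure, an algebraic variety over $\overline{\IQ}$. The crux, addressed in the last paragraph, is the claim that $W$ is a \emph{proper} subvariety. Granting this, $W$ is a curve, and any irreducible component $W_0$ with $W_0\cap S$ infinite is the Zariski closure of $W_0\cap S$, hence by Lang's theorem in characteristic zero \cite{MR0190146} a translate $C=\eta\cdot T$ of a one-dimensional subtorus $T$ by a torsion point $\eta$, with $C\cap S$ infinite.

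I then restrict $f$ to $C$. Since torsion points have trivial valuation in each coordinate, $C\cap S$ lies on the affinoid subdomain $C^\circ\subset C$ where $\vert X\vert=\vert Y\vert=1$, and a monomial parametrization of $T$ identifies $C^\circ$ with the unit circle $\Sp K'\langle t,t^{-1}\rangle$ of $\IG_m$ over some finite extension $K'/K$, carrying $C\cap S$ to an infinite set of roots of unity. The restriction $f\vert_{C^\circ}$ is an element of $K'\langle t,t^{-1}\rangle$; by Weierstrass preparation on the annulus a nonzero such element has only finitely many zeros on $\vert t\vert=1$ --- this is where convergence on the \emph{closed} polydisk, rather than mere formal power series, is essential --- so $f\vert_C\equiv 0$ and $C\subseteq Z(f)$. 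Since $\dim C=\dim Z(f)=1$ with both irreducible ($f$ is prime in the domain $T_2(K)$), we get $Z(f)=C$. Finally $C$ is cut out over $\overline{K}$ by a binomial $X^mY^l-\eta_0$ with $\eta_0$ a root of unity, hence over $K$ by the product of its Galois conjugates; comparing with $f$ in the localization $K\langle X^{\pm},Y^{\pm}\rangle$ --- a UFD whose only new primes are $X$ and $Y$, neither of which divides $f$ because $f(1,1)=0$ --- the irreducibility of $f$ forces $f=u\cdot(X^{m'}Y^{l'}-1)$ for a unit $u\in T_2(K)^\times$ and $(m',l')\in\IZ^2\setminus\{0\}$, the constant being $1$ precisely because $f(1,1)=0$. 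Absorbing a monomial and separating the sign of $l'$ yields the two stated forms $X^m-Y^l$ and $X^mY^l-1$.

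It remains to prove that $W$ is proper, equivalently that the torsion zeros of $f$ are not Zariski-dense in $\IG_m^2$ --- a statement false for a generic $f\in T_2(K)$, which must therefore use the $p$-adic analytic hypotheses and not just Lang's theorem. I would argue by a dichotomy on the prime-to-$p$ parts $(\omega_1^{(i)},\omega_2^{(i)})$ of the points of $S$. If these take only finitely many values, a pigeonhole argument produces infinitely many points of $S$ with a common prime-to-$p$ part $(\omega_1,\omega_2)$, whose complementary $p$-power parts then exhibit infinitely many torsion zeros of the translate $g(X,Y):=f(\omega_1 X,\omega_2 Y)$ lying in $\widehat{\IG}_m^2$; after recentering at the origin and rescaling, Theorem~\ref{maintheoremformal} (or, more precisely, the sharper result of \cite{Serban:2016aa}) shows that the resulting closed formal subscheme, and hence $Z(f)$ itself, is a torsion-translate of a formal subtorus, which is algebraic, so $W$ is proper. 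If instead the prime-to-$p$ parts take infinitely many values, one reduces --- via a pigeonhole on the $p$-power parts when their orders are bounded, and via Neira's Tate--Voloch theorem \cite{MR2705372} for $T_2(K)$ (which forces those prime-to-$p$ parts that come $p$-adically close to $Z(f)$ to lie on it) when they are not --- to the situation of a function $g\in T_2(L)$, over some finite $L/\IQ_p$, vanishing at infinitely many \emph{prime-to-$p$} torsion points. Turning this last situation into the desired algebraicity of $Z(g)$, again by exploiting that $g$ converges on the closed polydisk and using a zero-count on one-dimensional slices together with the Galois action on cyclotomic points, is the step I expect to be the main obstacle, since the formal techniques of Section~\ref{sec:formal} say nothing about prime-to-$p$ torsion.
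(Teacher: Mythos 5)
Your endgame (restricting $f$ to the curve $C$, Weierstrass preparation on $K'\langle t,t^{-1}\rangle$ to force $f\vert_C\equiv 0$, and the comparison in the Laurent Tate algebra) is sound, but the entire content of the proposition sits in the step you leave open: showing that the torsion zeros are not Zariski-dense in $\IG_m^2$, i.e.\ handling an infinite set of zeros whose prime-to-$p$ parts take infinitely many values. The idea you are missing is the one the paper uses in Lemmas \ref{lemma:unramifiedclosure} and \ref{lemma:stable}. Prime-to-$p$ roots of unity are unramified, hence Teichm\"uller lifts, so a Frobenius lift $\tau\in\Gal(K^{ur}/K)$ acts on them as the $p$-power map $[p]$; if such points are Zariski-dense in $Z(f)$, then $[p]Z(f)=\tau(Z(f))=Z(f)$, because $\tau$ acts by continuous automorphisms commuting with the analytic structure. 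Once $Z(f)$ is $[p]$-stable and infinite, Noether normalization produces a point of infinite order in the residue polydisk of the identity, and applying $\log_{\IG_m^n}$ to its $[p]$-orbit gives an infinite subset of a line accumulating at $0$; exponentiating forces a one-dimensional algebraic subtorus inside $Z(f)$. The paper does not pass through the Zariski closure and classical Lang at all: it runs the stabilizer dichotomy directly in the Tate algebra, uses the inertia Lemma \ref{Boxallformal} together with the translate Lemma \ref{translateclose} to strip off the wildly ramified $p$-part by induction on dimension, invokes Neira's Tate--Voloch theorem to put nearby unramified torsion squarely on $X$, and then applies the Frobenius argument above; Proposition \ref{prop:rigidLang} is the case $n=2$ of the resulting Theorem \ref{thm:rigid}.

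There is a second, unflagged gap in your dichotomy. In the branch where the prime-to-$p$ parts take finitely many values, you invoke Theorem \ref{maintheoremformal} (or \cite{Serban:2016aa}) and assert that the resulting formal subtorus ``is algebraic.'' It need not be: one-dimensional formal subgroups of $\widehat{\IG}_m^2$ are cut out by relations $(1+X)^a=(1+Y)^b$ with $a,b\in\IZ_p$, and every such subgroup --- rational exponents or not --- contains infinitely many $p$-power torsion points $(\zeta,\zeta^{a/b})$, so the formal theorem alone cannot force $(a,b)\in\IZ^2$. When the exponent is irrational these torsion points are Zariski-dense in $\IG_m^2$, so your closure $W$ would fail to be proper and the reduction to classical Lang collapses. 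Ruling this out requires genuinely using convergence on the closed polydisk; in the paper this again comes down to the $[p]$-stability argument (an algebraic $[p]$-stable curve through the identity must be a subtorus with integer exponents), so both branches of your dichotomy need the same missing ingredient.
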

Similarly to the formal setting, we utilize the notion of stabilizer and distinguish between finite and infinite stabilizers.
\subsection{Finite Stabilizer}
We can deal with the $p$-primary torsion as in the formal setting and we shall use this approach to reduce to when unramified torsion points, which have order prime to $p$, are dense in the relevant affinoid space. One then has the following:
\begin{lemma}\label{lemma:unramifiedclosure} Let $X:=\Sp(T_n(K)/I)$ denote an affinoid space over $K$ for $K/\mathbb{Q}_p$ finite. If unramified torsion points of $\IG_m^n(\IC_p)$ are dense in $X$ (in the Zariski topology on the rigid unit ball), then $X$ is stable under taking $p$-th powers. 
\end{lemma}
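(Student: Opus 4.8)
The idea is to exploit the fact that raising to the $p$-th power is, on the unramified torsion points $\mu_{p'}(\IC_p) = \{\zeta : \zeta^m = 1, p \nmid m\}$, induced by the Frobenius automorphism. Concretely, let $K^{\mathrm{unr}}$ be the maximal unramified extension of $K$ inside $\IC_p$ and let $\varphi \in \Gal(K^{\mathrm{unr}}/K)$ be (a lift of) the Frobenius; then for every unramified torsion point $\zeta$ we have $\varphi(\zeta) = \zeta^q$ where $q$ is the residue cardinality of $K$, so after replacing $K$ by an unramified extension we may assume $q = p$ and hence $\varphi(\zeta) = \zeta^p$ on all such $\zeta$. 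Thus the $p$-power map agrees with a continuous ring automorphism $\sigma := \varphi$ of $T_n(\IC_p^{\mathrm{unr}})$ (acting coefficientwise) on the dense set of unramified torsion points in $X$.

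The key steps, in order: (1) Write $I = (f_1,\dots,f_k)$ and observe that $\varphi$ acts on $T_n(K)$ coefficient-by-coefficient, sending $I$ to an ideal $\varphi(I)$; since the unramified torsion points of $X$ are Galois-stable (they are precisely the unramified torsion points satisfying $f_j(\zeta)=0$, and $f_j(\varphi(\zeta)) = \varphi^{-1}(\varphi(f_j))(\varphi(\zeta))$-type manipulations show the zero locus is permuted), deduce that the $p$-power images $\zeta^p$ of the dense set of unramified torsion points in $X$ again lie in $X$. (2) Let $\psi: X \to \IG_m^n$ be the $p$-power map, $x \mapsto (x_1^p,\dots,x_n^p)$, which is a morphism of rigid spaces preserving the unit polydisk. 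By step (1), $\psi$ maps a Zariski-dense subset of $X$ into $X(\IC_p)$. (3) Conclude that $\psi(X) \subseteq X$: the preimage $\psi^{-1}(X)$ is a Zariski-closed analytic subset of $X$ containing a Zariski-dense set, hence equals $X$ (here one uses that $X$ is reduced, or passes to $X_{\mathrm{red}}$, and that Zariski-closed subsets of an affinoid containing a Zariski-dense subset are everything). This is exactly the statement that $X$ is stable under taking $p$-th powers.

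The main obstacle I expect is step (1)–(3) bookkeeping around the field of definition: the $p$-power map is defined over $K$, but the Frobenius identification $\zeta \mapsto \zeta^p$ only holds after an unramified base extension, and one must check that passing from $X$ over $K$ to $X \otimes_K K'$ for $K'/K$ unramified does not affect the conclusion (it does not, since $p$-power stability can be checked after faithfully flat base change, and the unramified torsion points remain dense). A secondary technical point is justifying that a Zariski-closed rigid subspace of the affinoid $X$ which contains a Zariski-dense subset of $X$ must coincide with $X$ — this is immediate from the definition of the Zariski topology on $\Sp(T_n(K)/I)$ but should be stated cleanly, perhaps after reducing to $X$ irreducible and reduced as is done in the formal setting. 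Once the Frobenius-equals-$p$-power observation is in place, the argument is a short formal manipulation.
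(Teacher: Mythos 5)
Your proof follows exactly the same route as the paper's: identify the $p$-power map with a Frobenius lift on the (Teichm\"uller) unramified roots of unity, use that $X$ is defined over $K$ so its set of unramified torsion points is Galois-stable, and then pass to Zariski closures. Your step (3), pulling back $X$ along the power map and using that a Zariski-closed subset containing a Zariski-dense set is everything, is a slightly cleaner way of organizing the closure bookkeeping than the paper's chain $[p]\overline{\Sigma}=\overline{[p]\Sigma}=\overline{\tau(\Sigma)}=\tau(X)$, but it is the same argument.

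There is, however, one step that fails as written: the reduction ``after replacing $K$ by an unramified extension we may assume $q=p$.'' This is backwards --- enlarging $K$ only increases the residue cardinality $q$, and the Frobenius in $\Gal(K^{ur}/K)$ acts on Teichm\"uller lifts as $\zeta\mapsto\zeta^{q}$, never as $\zeta\mapsto\zeta^{p}$ unless the residue field of $K$ is already $\IF_p$. (A lift of the absolute $p$-power Frobenius does not fix $K$, so it need not preserve $X$.) Consequently your argument, correctly carried out, proves stability of $X$ under $q$-th powers, not $p$-th powers; and indeed $p$-power stability is false in general: for $p=2$, $K=\IQ_2(\omega)$ with $\omega$ a primitive cube root of unity, the curve $X=\Sp\bigl(T_2(K)/(X_1-\omega X_2)\bigr)$ contains the Zariski-dense set of unramified torsion points $(\omega\xi,\xi)$ but is not stable under squaring (only under fourth powers, $q=4$). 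You should be aware that the paper's own proof contains the same imprecision (it invokes ``a lift of the $p$-power Frobenius to $\Gal(K^{ur}/K)$,'' which exists only when the residue field is $\IF_p$), so this is a defect of the statement rather than of your strategy; the correct conclusion is $q$-power stability, which is all that the application in Lemma \ref{lemma:stable} requires, since one may there work with the orbit $\{x^{q^{t}}\}$ in place of $\{x^{p^{t}}\}$.
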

\begin{proof}
Since the roots of unity are unramified, they are Teichm\"uller lifts from $\overline{\IF}_p$. Let $\tau$ denote a lift of the $p$-power Frobenius to $\Gal(K^{ur}/K)$ and denote $[p](X_1,\ldots,X_n)=(X_1^p,\ldots,X_n^p)$. Then for all $\zeta\in K^{ur}$ we have the identity $\tau(\zeta)-[p](\zeta)=0$. Letting $\Sigma$ denote the set of dense unramified torsion points, one obtains that $[p]X=[p]\overline{\Sigma}=\overline{[p]\Sigma}=\overline{\tau(\Sigma)}$. Since $\tau$ acts by continuous automorphisms, $\overline{\tau(\Sigma)}=\tau(X)$ and this yields $[p]X=\tau(X)=X$,
 proving the claim.

\end{proof}

\begin{remark}Although we limit ourselves to the multiplicative case, a version of Lemma \ref{lemma:unramifiedclosure} should hold more generally for coordinates of unramified torsion points of algebraic groups (such as abelian varieties) satisfying a polynomial identity in a lift of Frobenius, see for instance \cite[Lemma 3.2.]{MR1989204} and \cite[Lemma 3.4.]{MR2185637}. 
\end{remark}

\begin{lemma}\label{lemma:stable}
Let $X:=\Sp(T_n(K)/I)$ denote an affinoid space over $K$. If $X(\IC_p)$ is infinite and stable under taking $p$-th powers, then $X$ contains a translate by a torsion point of a one-dimensional (algebraic) subtorus of $\IG_m^n$. 
\end{lemma}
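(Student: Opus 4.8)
The plan is to use the contracting behaviour of $[p]$ on residue disks, after a reduction step, to force $X$ to coincide near a well‑chosen torsion point with the germ of a translate of a formal subtorus, and then to upgrade that germ to an algebraic subtorus by a dimension count involving the Zariski closure of $X$. First I would carry out the standard reductions: since $[p]$ permutes the finitely many irreducible components of $X$ and $X(\IC_p)$ is infinite, a suitable power $[q]=[p^k]$ stabilises an irreducible component $X_0$ with $\dim X_0\ge 1$; replacing $X$ by $(X_0)_{\mathrm{red}}$ I may assume $X$ is reduced, irreducible, positive‑dimensional and $[q](X)=X$ (the last equality because $[q]$ is finite on the closed unit torus). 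As every torsion point of $\IG_m^n$ is a unit, I may also regard $X$ as a reduced affinoid subdomain of the closed unit torus $\mathbb{T}^n:=\Sp(K\langle X_1^{\pm 1},\dots,X_n^{\pm 1}\rangle)$, and I set $d:=\dim X\ge 1$.

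Next I would reduce modulo the maximal ideal: this yields a scheme $\widetilde X\subseteq\IG_m^n$ over the residue field, of dimension $d$ and stable under the $q$‑power map; enlarging $q$ I may assume $\widetilde X$ has a smooth point $\widetilde x_0\in\widetilde X(\IF_q)$. The residue disk $D$ over $\widetilde x_0$ is then $[q]$‑stable, $X\cap D$ is a nonempty closed analytic subvariety of $D$ smooth of dimension $d$, and on $D\cong[\widetilde x_0]\cdot(1+\mathfrak m_{\IC_p})^n$ the map $[q]$ acts by $u\mapsto u^q$, i.e. via $\log$ by multiplication by $q$ on the Lie algebra. Since multiplication by $q$ is a contraction with unique fixed point $0$ and $X\cap D$ is closed, the Teichm\"uller centre $[\widetilde x_0]$ lies in $X$ and is a torsion point, and $Z:=\log\bigl([\widetilde x_0]^{-1}(X\cap D)\bigr)$ is a $q$‑multiplication‑stable analytic germ at $0$, smooth of dimension $d$. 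For $F$ vanishing on $Z$ the germs $q^{-m}F(q^m z)$ vanish on $Z$ and converge to the linear part of $F$; hence $Z$ lies in the common kernel $L$ of these linear parts, which has dimension $d$ by smoothness, so $Z$ is a neighbourhood of $0$ in the $d$‑dimensional $\IC_p$‑subspace $L$. Thus $X$ contains the germ $[\widetilde x_0]\cdot\bigl(\exp(L)\cap(\text{small polydisk})\bigr)$ of a translate by a torsion point of a $d$‑dimensional formal subtorus.

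It remains to promote this germ to an algebraic subtorus, and this is where I expect the main difficulty to lie. The Zariski closure of $[\widetilde x_0]\exp(L)$ in $\IG_m^n$ equals $[\widetilde x_0]$ times the subtorus $T_L$ attached to the smallest $\IQ$‑rational subspace containing $L$, so it has dimension $\ge d$, with equality exactly when $L$ is $\IQ$‑rational; and it is contained in the Zariski closure $\overline X$ of $X$. The crucial point is that $[q]$‑stability forces $\overline X$ to be algebraic of dimension $d=\dim X$ — equivalently, that the ideal of $X$ in $K\langle X_1^{\pm1},\dots,X_n^{\pm1}\rangle$ is generated by Laurent polynomials. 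This is precisely the step where working in the Tate algebra (coefficients tending to $0$) is indispensable: for $f=\sum_v a_v X^v$ with $a_v\to 0$ the leading Laurent part consisting of the finitely many terms of maximal absolute value is well defined and survives reduction modulo $\mathfrak m$, and comparing the leading parts of $f$ and of $f(X^q)$ on $X$ — using that the characteristic is $0$, so that no cancellation occurs among the leading terms of $f(X)^q$ versus those of $f(X^q)$ — forces the exponents of $f$ into one residue class modulo $q$, whence by descent $f$ is a monomial times an element of $I\cap K[X_1^{\pm1},\dots,X_n^{\pm1}]$. Without the hypothesis $a_v\to 0$ this is false: over the open polydisk, $Y=X^\alpha$ with $\alpha\in\IZ_p\setminus\IQ$ is $[p]$‑stable yet has Zariski‑dense, hence non‑algebraic, closure, and ruling out this phenomenon is exactly the role of the Tate‑algebra condition. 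Granting this, $d=\dim\overline X\ge\dim[\widetilde x_0]T_L=\dim T_L\ge d$, so $L$ is $\IQ$‑rational and $T:=T_L$ is a $d$‑dimensional algebraic subtorus with $[\widetilde x_0]\bigl(T^{\mathrm{an}}\cap\mathbb{T}^n\bigr)\subseteq X$; choosing any one‑dimensional subtorus $T'\subseteq T$ exhibits in $X$ the translate by the torsion point $[\widetilde x_0]$ of the one‑dimensional algebraic subtorus $T'$. (Alternatively, the middle step — locating the translated formal subtorus inside the residue disk $D$ — could be run through the formal Manin--Mumford results of Section~\ref{sec:formal}.)
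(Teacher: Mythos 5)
Your approach is genuinely different from the paper's. The paper places the identity in $X$, uses Noether normalization to produce a point $x$ of infinite order on $X$ close to $1$, observes that $p$-th power stability puts the whole orbit $\{x^{p^t}\}$ on $X$, takes $\log$ to land these on a line accumulating at $0$, and exponentiates to an algebraic subgroup meeting $X$ in infinitely many points (finishing by Zariski density on a one-dimensional affinoid). You instead reduce mod $\im$, locate a smooth $\IF_q$-point of $\widetilde X$, use the contraction $u\mapsto u^q$ on the residue disk to force the Teichm\"uller torsion point into the closed set $X\cap D$, and linearize the germ there by the scaling action of $q$. The fixed-point mechanism for producing the torsion point is a nice idea and is a route the paper does not take. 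However, the step you yourself flag as crucial --- that $[q]$-stability forces $L$ to be $\IQ$-rational, equivalently that the ideal in $K\langle X_1^{\pm1},\dots,X_n^{\pm1}\rangle$ is generated by Laurent polynomials --- is a genuine gap, and the leading-term comparison you offer does not close it.

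Concretely, normalize $\|f\|=1$ and enlarge $q$ so that the residue field of $K$ is contained in $\IF_q$. The ``leading Laurent part'' of $f$ is then its reduction $\bar f\in\overline{\IF}_p[X_1^{\pm1},\dots,X_n^{\pm1}]$, and by Frobenius $\overline{f(X^q)}=\bar f(X^q)=\bar f(X)^q$. So, contrary to the claim that ``no cancellation occurs among the leading terms of $f(X)^q$ versus those of $f(X^q)$,'' these two elements of $I$ have the \emph{same} leading part, their difference drops in spectral norm, and the comparison yields nothing beyond $\bar f\in\bar I$, which is automatic. It does not confine the exponents of $f$ to one residue class modulo $q$; and even if it did, a Tate series supported on a coset $v_0+q\IZ^n$ with coefficients tending to $0$ need not be algebraic, so the asserted ``descent'' to a monomial times a Laurent polynomial would not follow. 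Thus the argument does not yet exclude irrational germs such as $v=u^\alpha$ with $\alpha\in\IZ_p\setminus\IQ$, which is precisely what must be ruled out. A lesser issue: you use that $X\cap D$ is smooth at $[\widetilde x_0]$, not just that $\widetilde X$ is smooth at $\widetilde x_0$; this needs a flatness/regularity input about the integral model, and without smoothness your passage from ``lowest-degree homogeneous parts of elements of the ideal vanish on $Z$'' to ``$Z$ is a linear subspace'' breaks down --- one only gets that the tangent cone is conical, which carries no information.
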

We note that over a number field subvarieties of a semi-abelian variety which are stable under the multiplication-by-$m$ map for some $m\in\IN$ and pass through the origin are known to be subgroups (see e.g., \cite{MR0190146,MR2176757}). Thus such a result is to be expected. 
\begin{proof}
We assume without loss of generality that the multiplicative identity is in $X(\IC_p)$. We first claim that there is a point of infinite order on $X(\IC_p)$ lying on the interior of the unit polydisk centered at the identity $1+\im_{\IC_p}^n$. Indeed, by Noether normalization, there exists a finite injective map 
$$T_d(K)\hookrightarrow T_n(K)/I$$
for some integer $d\geq 1$ that constitutes the Krull dimension of the affinoid algebra $T_n(K)/I$. Since $X$ passes through the multiplicative identity, we may find $x=(x_1,\ldots,x_n)\in X(\IC_p)$ such that 
\begin{enumerate}
   \item the valuation satisfies $v_p(x_i-1)>1/(p-1)$ for all $1\leq i\leq n$,
  \item for at least $d$ coordinates we have $x_i\neq 1$.
\end{enumerate}
Since the coordinates of torsion points have valuations $v_p(\zeta-1)\geq 1/(p-1)$ (throughout we normalize valuations so that $v_p(p)=1$), this establishes the claim. Taking the $p$-adic logarithm in each coordinate, which gives a group homomorphism $\log_{\IG_m^n}$ to $\IG_a^n(\IC_p)$, we find that the image of the points on $X(\IC_p)$ that lie inside the ball of radius $p^{-1/(p-1)}$ centered at the multiplicative identity has infinite intersection $\{ \log_{\IG_m^n}(x^{p^t})\vert t>1\}$ with a straight line and has zero as point of accumulation. Exponentiating, we find that $X(\IC_p)$ has infinite intersection with a one-dimensional algebraic subgroup $H$ of $\IG_m^n(\IC_p)$ and by Zariski density must contain $H$. 
\end{proof}

\subsection{Infinite Stabilizer}
We now consider the case when the stabilizer under multiplication of our space $X=\Sp(A)$ associated to an affinoid $K$-algebra $A:=T_n(K)/I$ is infinite and show that $X$ has to contain a positive-dimensional algebraic subgroup. 
\begin{lemma}\label{lemma:rigidstab}
Assume that the stabilizer $\Stab(X)$ of an affinoid space $X$ passing through the identity contains infinitely many algebraic points. Then $X$ contains a positive dimensional algebraic subtorus. 
\end{lemma}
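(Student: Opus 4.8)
The plan is to mimic the formal‑group argument of Lemma~\ref{lemma:infinitestab}, replacing formal subschemes by affinoid subspaces and the valuation ring $\Kbar$‑points by the algebraic torsion points. First I would reduce, exactly as in the proof of Theorem~\ref{maintheoremformal}, to the situation where $X$ is irreducible and passes through the identity of $\IG_m^n$. Then I would record the analogue of Lemma~\ref{stabilizerisagroup}: for any $\IC_p$‑algebra (or for $\IC_p$‑points) one has $\Stab(X)=\bigcap_{Q\in X}T_QX$ as sets, where $T_QX=\Sp(T_n(K)/(f\circ m_Q)_{f\in I})$ is the translate of $X$ by $Q^{-1}$ under the multiplicative group law, and each $T_QX$ is again affinoid over $K(Q)$ (this is the rigid‑analytic counterpart of Lemma~\ref{lemma:translate}; here $m_Q$ is substitution $X_i\mapsto Q_iX_i$, which preserves $T_n$ since $|Q_i|_p=1$ for torsion $Q$).

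The heart of the argument is a Noetherian descending‑chain / dimension step: since $\Stab(X)$ has infinitely many algebraic points, it cannot be contained in any finite intersection $T_{Q_1}X\cap\cdots\cap T_{Q_k}X$ that is zero‑dimensional. Using that affinoid algebras over $K$ are Noetherian and that intersecting with one more translate $T_{Q_{k+1}}X$ strictly drops the dimension of every component unless $\Stab(X)$ is already contained in the intersection, I would extract finitely many algebraic torsion points $Q_1=1,Q_2,\ldots,Q_k\in X(\IC_p)$ with
$$\Stab(X)(\IC_p)=T_{Q_1}X(\IC_p)\cap\cdots\cap T_{Q_k}X(\IC_p).$$
Set $\calS:=\bigcap_{i=1}^k T_{Q_i}X$, an affinoid subspace of $\IG_m^n$ defined over a finite extension of $K$, whose $\IC_p$‑points form an infinite subgroup of $\IG_m^n(\IC_p)$ contained in $X$. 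Then, arguing as at the end of Lemma~\ref{lemma:infinitestab}, the infinite (hence Zariski‑dense, since $\dim\calS>0$) subgroup $\calS(\IC_p)\times\calS(\IC_p)$ of $\calS\times\calS$ on which $f\circ(\text{mult})$ vanishes for every $f$ in the ideal of $\calS$ forces $\calS$ to be stable under the multiplicative group law and under inversion; hence $\calS$ is a positive‑dimensional subgroup of $\IG_m^n$ contained in $X$. Finally, a positive‑dimensional rigid‑analytic subgroup of $\IG_m^n$ sitting inside an affinoid must be algebraic: its Zariski closure is an algebraic subtorus of the same dimension, and being positive‑dimensional and proper it is already contained in $X$ by Zariski density of $\calS(\IC_p)$, giving the desired algebraic subtorus.

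The main obstacle I anticipate is the dimension‑drop bookkeeping in the rigid‑analytic category: one must check that intersecting an irreducible affinoid with a translate that does not contain the stabilizer genuinely lowers the Krull dimension of the affinoid algebra on each component, and that the process terminates. This is the place where the Noetherianity of $T_n(K)$ (and of $T_n(L)$ for finite $L/K$) is essential, and where one should be a little careful that passing to the finite extension $K(Q_1,\ldots,Q_k)$ does not interfere with irreducibility or with the identification of $\Stab(X)(\IC_p)$ with the finite intersection of $\IC_p$‑points. A secondary point requiring care is the passage from ``$\calS$ is stable under the group law on $\IC_p$‑points'' to ``$\calS$ is a subgroup scheme'': one invokes Zariski density of $\calS(\IC_p)$ in $\calS$, which holds because $\calS$ is reduced (or can be taken so) and positive‑dimensional with an infinite, hence dense, set of algebraic points. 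Everything else is a routine transcription of the formal‑group arguments already carried out in Section~\ref{sec:formal}.
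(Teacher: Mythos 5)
Your proposal follows essentially the same route as the paper: reduce $\Stab(X)$ to a finite intersection $T_{Q_1}X\cap\cdots\cap T_{Q_k}X$ via Noetherianity of the Tate algebras, observe that the $\IC_p$-points of the resulting affinoid form a subgroup of $\IG_m^n(\IC_p)$ and are Zariski-dense so that $\calS$ is a subgroup object, and conclude via the positive-dimensional identity component. Two small cautions: the inference ``infinite, hence Zariski-dense since $\dim\calS>0$'' is not valid in general (an infinite subset of a positive-dimensional variety need not be dense); the correct justification is that $\calS(\IC_p)$ here is \emph{all} of the $\IC_p$-points of the reduced affinoid $\calS$, which is automatically a Zariski-dense set. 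Also, your final assertion that the Zariski closure of $\calS$ is an algebraic subtorus \emph{of the same dimension} contained in $X$ is the genuine content of the last step and deserves an argument -- the paper is equally terse there, but the intended mechanism is the $p$-adic logarithm as in Lemma~\ref{lemma:stable}, which turns the analytic subgroup near the identity into a linear subspace, whereas Zariski closure of an arbitrary analytic subgroup could a priori jump in dimension; it is the fact that the defining equations live in $T_n$ together with the logarithm that rules this out.
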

\begin{proof}
We proceed similarly to Lemma \ref{lemma:infinitestab}. Since affinoid algebras $T_n(\overline{K})/I$ are Noetherian, the intersection on the right hand side of 
$$\Stab(X)(\overline{K})=\bigcap_{Q \in X(\overline{K})}T_QX(\overline{K})$$
is finite, denoting similarly to the formal case $T_QX(\overline{K})=\{Q^{-1}\cdot x\vert x\in X(\overline{K})\}$. Thus $\Stab(X)$ is a positive dimensional affinoid subspace over a finite extension $L$ of $K$ and is contained in $X$. Moreover, $\Stab(X)(\IC_p)$ is a subgroup of the multiplicative group $\IG_m^n(\IC_p)$ and therefore the irreducible component of $\Stab(X)$ passing through the origin is positive dimensional and has dense intersection with an algebraic subtorus. The result follows.   
\end{proof}

Alternatively, we sketch an argument using the approach in A. Neira's thesis \cite{MR2705372}. We proceed by induction on the number $n$ of variables. \\

Let $f\in T_n(K)$ which without loss of generality may be assumed to have $\calO_K$-coefficients.
We also assume $f$ is $X_i$-distinguished for some $1\leq i\leq n$. We may as well assume $i=n$ and apply Weierestrass preparation (see \cite[Section 5.2.2]{MR746961}). Since units do not vanish on the $p$-adic polydisk, we may assume that $f$ is a polynomial in $X_n$ of degree $\nu$. Let $\rho \in T_{n-1}(K)[X_n]$ cut out an irreducible component $W$ of $\Sp(T_n(K)/(f))$. 

\begin{lemma}[\cite{MR2705372}]\label{lemma:relation} For any $(\alpha_1,\ldots, \alpha_n)\in \Stab(W)$ we must have that 
\begin{equation}\label{eq1}\rho(\alpha_1\cdot X_1,\ldots, \alpha_n\cdot X_n)=\alpha_n^\nu \cdot \rho (X_1,\ldots, X_n).
\end{equation}
\end{lemma}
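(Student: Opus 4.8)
The plan is to exploit the fact that membership in the stabilizer $\Stab(W)$ means $W$ is invariant under the translation map $(X_1,\ldots,X_n)\mapsto(\alpha_1 X_1,\ldots,\alpha_n X_n)$, and to track what this invariance does to a Weierstrass-type generator $\rho\in T_{n-1}(K)[X_n]$ of the ideal of $W$. Concretely, if $W=\Sp(T_n(K)/\mathfrak{p})$ with $\mathfrak{p}$ the prime ideal cut out (after Weierstrass preparation) by the distinguished polynomial $\rho$, then $(\alpha_1,\ldots,\alpha_n)\in\Stab(W)$ forces the automorphism $T_{\alpha}\colon X_i\mapsto\alpha_i X_i$ of $T_n(K)$ to send $\mathfrak{p}$ to itself, hence to send $\rho$ to another generator of $\mathfrak{p}$. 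The first step is therefore to argue that $\rho(\alpha_1 X_1,\ldots,\alpha_n X_n)$ again lies in $\mathfrak{p}$ and is, up to a unit of $T_n(K)$, equal to $\rho(X_1,\ldots,X_n)$.

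Next I would pin down that unit. Since $\rho$ is $X_n$-distinguished of degree $\nu$ — monic in $X_n$ with the lower-order coefficients in the maximal ideal of $T_{n-1}(\calO_K)$ — the substitution $X_n\mapsto\alpha_n X_n$ (with $\alpha_n$ a root of unity, hence a unit of $\calO_{\IC_p}$) produces a polynomial in $X_n$ whose leading term is $\alpha_n^{\nu}X_n^{\nu}$ and whose remaining coefficients still lie in the maximal ideal; in particular $\rho(\alpha_1 X_1,\ldots,\alpha_n X_n)$ is again, up to the scalar $\alpha_n^{\nu}$, an $X_n$-distinguished polynomial of the same degree $\nu$. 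Now both $\alpha_n^{-\nu}\rho(\alpha_1 X_1,\ldots,\alpha_n X_n)$ and $\rho(X_1,\ldots,X_n)$ are distinguished polynomials in $X_n$ of degree $\nu$ generating the same ideal $\mathfrak{p}$, so by the uniqueness clause in Weierstrass division/preparation (the distinguished polynomial representative of an element of $\mathfrak{p}$ of $X_n$-degree $\nu$ is unique) they must coincide. This yields exactly \eqref{eq1}.

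The step I expect to be the main obstacle is the second one: making the uniqueness argument airtight when $W$ is merely an irreducible \emph{component} of $\Sp(T_n(K)/(f))$ rather than all of it, so that $\rho$ is a factor of $f$ (in $T_{n-1}(K)[X_n]$, after Weierstrass preparation on $f$) rather than $f$ itself. One has to know that this factor is again distinguished, or at least can be normalized to be distinguished, and that the associated prime $\mathfrak{p}=(\rho)$ in $T_n(K)$ has a unique distinguished generator of its $X_n$-degree — this is where one invokes that $T_{n-1}(K)$ is a factorial Noetherian ring and that Weierstrass preparation is compatible with factorization (see \cite[Section 5.2.2]{MR746961}). Once the distinguished factor $\rho$ is canonically attached to $W$, the invariance $T_\alpha(W)=W$ transports to $T_\alpha(\mathfrak{p})=\mathfrak{p}$, and then to $T_\alpha(\rho)=(\text{unit})\cdot\rho$ with the unit forced to be $\alpha_n^{\nu}$ by comparing leading $X_n$-coefficients and using uniqueness, completing the proof.
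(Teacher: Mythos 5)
Your proposal is correct and follows essentially the same route as the paper's sketch: stabilizer invariance forces $\rho(\alpha_1X_1,\ldots,\alpha_nX_n)$ to equal a unit times $\rho$, and the unit is then identified as $\alpha_n^{\nu}$ using that $\rho$ is $X_n$-distinguished of degree $\nu$. Your appeal to uniqueness of the distinguished representative in Weierstrass preparation is just a repackaging of the paper's step of showing the unit $\lambda$ is independent of $X_n$ and comparing leading coefficients in $X_n$.
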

\begin{proof}
This is proved in \cite[Chapter 4]{MR2705372}. We briefly sketch the idea: since $\alpha$ stabilizes irreducible $W$ we must have that 
\begin{equation}\label{eq2}\rho(\alpha_1\cdot X_1,\ldots, \alpha_n\cdot X_n)=\lambda(X_1,\ldots, X_n)\cdot \rho (X_1,\ldots, X_n)\end{equation}
for some unit $\lambda$. Because the power series involved are polynomials in $X_n$, one can show that $\lambda(X)$ is a power series in just the variables $(X_1,\ldots, X_{n-1})$. The identity then follows by comparing coefficients of the polynomials in $X_n$ of equation \eqref{eq2}.
\end{proof}

The group structure on points of the stabilizer yields a homomorphism
\begin{align*} \IZ^n&\rightarrow \Hom_{\Grp} (\Stab(W)(\calO_{\IC_p})\to \calO_{\IC_p}^\times)\\
\lambda &\to (\alpha_1,\ldots, \alpha_n)\mapsto \alpha_1^{\lambda_1}\cdots\alpha_n^{\lambda_n},
\end{align*}
and we denote by $\Lambda$ the kernel of this map. One easily obtains the following relation:
\begin{lemma}[\cite{MR2705372}]\label{lemma:Lambda} If the stabilizer is infinite, then $\Lambda$ is contained in a linear hyperplane given by $\sum_{i=1}^n c_i\lambda_i=0$ for some integers $c_i$. 
\end{lemma}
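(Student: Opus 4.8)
The statement to prove is Lemma \ref{lemma:Lambda}: if the stabilizer $\Stab(W)$ is infinite, then the kernel $\Lambda \subseteq \IZ^n$ of the character map $\IZ^n \to \Hom_{\Grp}(\Stab(W)(\calO_{\IC_p}), \calO_{\IC_p}^\times)$ is contained in a rational hyperplane $\sum_{i=1}^n c_i \lambda_i = 0$. The plan is to show that $\Lambda$ has rank at most $n-1$, which is equivalent to the hyperplane statement (any sublattice of $\IZ^n$ of rank $\le n-1$ lies in a rational hyperplane, and conversely). So it suffices to exhibit one nonzero vector $\lambda^{(0)} = (\lambda_1^{(0)}, \ldots, \lambda_n^{(0)}) \in \IZ^n$ that does \emph{not} lie in $\Lambda$; indeed, a little more care is needed — I want to rule out $\Lambda = \IZ^n$, but actually I want $\Lambda$ of rank $\le n-1$. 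The cleanest route: produce a group homomorphism $\chi \colon \Stab(W)(\calO_{\IC_p}) \to \calO_{\IC_p}^\times$ which is nontrivial and factors through the character lattice, i.e. equals $(\alpha_1, \ldots, \alpha_n) \mapsto \prod \alpha_i^{\lambda_i}$ for some $\lambda \notin \Lambda$; then $\Lambda \subseteq \{\lambda : \langle \lambda, - \rangle \text{ kills the image}\}$ — hmm, let me instead argue directly.

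**Key steps.** First I would invoke Lemma \ref{lemma:relation}: for every $\alpha = (\alpha_1, \ldots, \alpha_n) \in \Stab(W)$ we have $\rho(\alpha_1 X_1, \ldots, \alpha_n X_n) = \alpha_n^\nu \rho(X_1, \ldots, X_n)$. Writing $\rho = \sum_J a_J X^J$ (a polynomial in $X_n$ of degree $\nu$, with $\calO_K$-coefficients, and without loss of generality a nonzero coefficient $a_{J_0} \ne 0$), comparing the coefficient of $X^J$ on both sides gives $\alpha^J := \prod_i \alpha_i^{J_i} = \alpha_n^\nu$ for every multi-index $J$ with $a_J \ne 0$. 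In other words, for every such $J$, the vector $J - \nu e_n \in \IZ^n$ pairs trivially under the character map — that is, $J - \nu e_n$ lies in the \emph{annihilator} relation $\prod \alpha_i^{(J-\nu e_n)_i} = 1$ on all of $\Stab(W)$. Equivalently, the image of the character map $\IZ^n \to \Hom(\Stab(W), \calO_{\IC_p}^\times)$ is killed by pairing with all such $J - \nu e_n$. Now here is the dichotomy I would run: if $\rho$ is a monomial (a single term), then $W$ is contained in a coordinate hyperplane, and one handles this degenerate case separately (it forces $X$ to contain a coordinate subtorus translate directly, or one reduces $n$). If $\rho$ has at least two distinct monomials $X^{J_0}$ and $X^{J_1}$ appearing, then $\mu := J_1 - J_0 \in \IZ^n$ is a \emph{nonzero} vector, and from $\alpha^{J_0} = \alpha_n^\nu = \alpha^{J_1}$ we get $\alpha^\mu = 1$ for all $\alpha \in \Stab(W)$; hence $\mu \in \Lambda$. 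That shows $\Lambda \ne 0$, but I actually want the reverse — $\Lambda$ small. Let me reverse the logic: the point is that the character map $\IZ^n \to \Hom(\Stab(W), \calO_{\IC_p}^\times)$ has image equal to the group of characters of $\Stab(W)$ that extend to $\IG_m^n$; since $\Stab(W)$ is infinite, its Zariski closure (an algebraic subgroup of $\IG_m^n$, using Lemma \ref{lemma:rigidstab}) has positive dimension, so it admits a nontrivial character $\alpha \mapsto \alpha^{\lambda}$, i.e. there exists $\lambda \in \IZ^n$ with $\alpha \mapsto \alpha^\lambda$ nonconstant on $\Stab(W)$; this $\lambda \notin \Lambda$. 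Therefore $\Lambda \subsetneq \IZ^n$ is a proper subgroup. But "proper" is not yet "rank $\le n-1$". To upgrade: $\Lambda$ is the kernel of a homomorphism $\IZ^n \to \Hom(\Stab(W), \calO_{\IC_p}^\times)$ whose image is the (finitely generated, by Noetherianity / Lemma \ref{lemma:rigidstab}) character group of the positive-dimensional algebraic group $\overline{\Stab(W)}$; this image contains a free abelian group of rank $= \dim \overline{\Stab(W)} \ge 1$, so $\IZ^n / \Lambda$ has rank $\ge 1$, whence $\Lambda$ has rank $\le n-1$, and a rank-$\le(n-1)$ sublattice of $\IZ^n$ spans a proper $\IQ$-subspace, i.e. lies in $\sum c_i \lambda_i = 0$ for suitable integers $c_i$.

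**Writing it up.** Concretely I would present it as: (1) by Lemma \ref{lemma:rigidstab}, $Z := \overline{\Stab(W)}^{\mathrm{Zar}} \subseteq \IG_m^n$ is an algebraic subgroup of dimension $\ge 1$ (this is where "infinite" is used); (2) the character map $\IZ^n = X^*(\IG_m^n) \to X^*(Z)$ is surjective with kernel exactly the $\lambda$ with $\alpha^\lambda \equiv 1$ on $Z$, hence on the dense subgroup $\Stab(W)(\calO_{\IC_p})$ — so this kernel is precisely $\Lambda$; (3) $X^*(Z) \cong \IZ^{\dim Z}$ has rank $\ge 1$, so $\IZ^n/\Lambda$ is infinite and $\mathrm{rank}(\Lambda) \le n-1$; (4) conclude by choosing $(c_1, \ldots, c_n) \in \IZ^n \setminus \{0\}$ orthogonal to the $\IQ$-span of $\Lambda$.

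**Main obstacle.** The delicate point is step (2): one must be careful that the kernel of $\IZ^n \to X^*(Z)$ computed via characters of the \emph{algebraic group} $Z$ really coincides with $\Lambda$ as defined via the \emph{analytic points} $\Stab(W)(\calO_{\IC_p})$, and that every algebraic character of $Z$ is realized by some $\lambda$ (surjectivity). This is fine because $Z$ is a subgroup of the split torus $\IG_m^n$ so its character group is the quotient of $X^*(\IG_m^n) = \IZ^n$ by the sublattice of characters vanishing on $Z$, and because $\Stab(W)(\calO_{\IC_p})$ is Zariski-dense in $Z$ by construction — a Laurent monomial $\alpha^\lambda$ is identically $1$ on a Zariski-dense subset iff it is identically $1$ on $Z$. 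A secondary subtlety is the monomial/coordinate-hyperplane degeneracy for $\rho$, which does not actually obstruct anything since the argument above never used Lemma \ref{lemma:relation} at all — it only uses Lemma \ref{lemma:rigidstab} — so I would note that Lemma \ref{lemma:relation} is needed for the \emph{later} steps of Neira's argument (extracting the explicit exponents) but that Lemma \ref{lemma:Lambda} itself follows purely from the group-theoretic structure of $\Stab(W)$.
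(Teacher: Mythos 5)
Your final argument (steps (1)--(4)) is correct, but it takes a different and heavier route than the paper. The paper's proof is a two-line contrapositive: if $\Lambda$ were not contained in any hyperplane it would have full rank $n$ in $\IZ^n$, hence would contain a nonzero multiple $\lambda_j e_j$ of each standard basis vector; then $\alpha_j^{\lambda_j}=1$ for every $\alpha\in\Stab(W)$ and every $j$, so each coordinate of each stabilizer element is a root of unity of bounded order and $\Stab(W)$ is finite, a contradiction. You instead argue directly: the Zariski closure $Z$ of the infinite group $\Stab(W)(\calO_{\IC_p})$ is a positive-dimensional diagonalizable subgroup of $\IG_m^n$, the map $\IZ^n\to X^*(Z)$ has kernel exactly $\Lambda$ by Zariski density, and $X^*(Z)$ has rank $\dim Z\geq 1$, so $\rank(\Lambda)\leq n-1$. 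The two proofs are dual formulations of the same dichotomy ($\Lambda$ of full rank if and only if $\Stab(W)$ is contained in a finite group scheme), but yours makes the torus structure explicit and, usefully, shows that the lemma needs nothing from Lemma \ref{lemma:relation} or the distinguishedness of $\rho$; the paper's is more elementary and self-contained. Two small points: your attribution to Lemma \ref{lemma:rigidstab} is slightly off, since that lemma does not literally assert that the Zariski closure of $\Stab(W)$ is a positive-dimensional algebraic subgroup (though the fact you need --- that the Zariski closure of an infinite subgroup of $\IG_m^n(\IC_p)$ is a positive-dimensional closed subgroup --- is standard and easy); and the exploratory middle of your write-up, where you extract vectors lying \emph{in} $\Lambda$ from Lemma \ref{lemma:relation}, points in the wrong direction and should be cut, as you yourself note.
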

\begin{proof}
Assume not, so that $\Lambda$ contains $n$ linearly independent vectors and therefore contains $(0,\ldots,\lambda_j,\ldots 0)$ for any $1\leq j\leq n$ and some appropriate $\lambda_j$. It follows that $\alpha_j^{\lambda_j}=1$ for $1\leq j\leq n$ if $\alpha\in \Stab(W)$.
\end{proof}
Thus for $c_i$ as in Lemma \ref{lemma:Lambda}, assuming that $c_n\neq 0$ and that $\eta$ is a root of unity chosen such that $\zeta_n=\eta^{-c_n}$, we may write:
\begin{align*}
\rho(\zeta)&=\sum \rho_{i_1,\ldots,i_n}\zeta_1^{i_1}\cdots \zeta_n^{i_n}\\
&=\sum\rho_{i_1,\ldots,i_n}(\zeta_1\eta^{c_1})^{i_1}\cdots (\zeta_{n-1}\eta^{c_{n-1}})^{i_{n-1}} \cdot\zeta_n^{i_n}\eta^{-\sum_{k=1}^{n-1} c_ki_k}\\
&=\sum\rho_{i_1,\ldots,i_n}(\zeta_1\eta^{c_1})^{i_1}\cdots (\zeta_{n-1}\eta^{c_{n-1}})^{i_{n-1}} \cdot\zeta_n^{i_n}\eta^{c_n(i_n-\nu)}\\
&=\zeta_n^{\nu}\sum\rho_{i_1,\ldots,i_n}(\zeta_1\eta^{c_1})^{i_1}\cdots (\zeta_{n-1}\eta^{c_{n-1}})^{i_{n-1}},
\end{align*}
using that $\rho_{i_1,\ldots,i_n}\neq 0$ implies $(i_1,\ldots,i_{n-1}, i_n-\nu)\in \Lambda$ by equating monomials in $X_n$ in equation \eqref{eq1} of Lemma \ref{lemma:relation}. 
We deduce therefore that $\overline{\rho}(X):=\rho(X_1,\ldots,X_{n-1},1)$ vanishes at the roots of unity $(\zeta_1\eta^{c_1},\ldots, \zeta_{n-1}\eta^{c_{n-1}})$ for $\zeta$ a torsion point on $W$. One may now consider $\overline{\rho}\in T_{n-1}(K)$ and proceed by induction. Similarly one may induct when $c_n=0$. In this way Neira shows that for such spaces $\Sp(T_n(K)/(f))$ the analogue of the Tate--Voloch Conjecture holds in this setting, namely there exists a positive constant $C$ such that for all torsion points $\zeta\in\IG_m^n(\IC_p)$ either $f(\zeta)=0$ or $\vert f(\zeta)\vert_p>C$, see \cite[Theorem 4.1.]{MR2705372}. We may leverage this method to prove the Manin--Mumford type statement for $W$ as well: consider the group homomorphism
\begin{align*}
\pi_\eta:\IG_m^{n, tor}(\IC_p)&\to \IG_m^{n-1, tor}(\IC_p)\\
(\zeta_1,\ldots,\zeta_n)&\mapsto (\zeta_1\eta^{c_1},\ldots, \zeta_{n-1}\eta^{c_{n-1}}).
\end{align*}
Since $\overline{\rho}(X):=\rho(X_1,\ldots,X_{n-1},1)$ vanishes along $\pi_\eta(W\cap\IG_m^{n, tor}(\IC_p) )$, one sees that if $\rho$ vanishes along the torsion on an algebraic subtorus, so does $\overline{\rho}$. One concludes by induction on $n$, the case of $n=1$ following by Weierstrass preparation.
\par 
 
In this way one may deduce the result for some spaces of the form $\Sp(T_n(K)/(f))$. However, the drawback is that one needs the power series involved in the argument to be distinguished in a variable. For a general power series, this may always be arranged after twisting by a $K$-algebra automorphism $\sigma$ of $T_n(K)$ given by $\sigma(X_i)=X_i+X_n^{c_i}$ for $c_i\in\IN$, $1\leq i<n$ and $\sigma(X_n)=X_n$ for suitable choices of $c_i$ (see, e.g., \cite[Section 5.2.4]{MR746961}). But these $K$-algebra automorphisms may not preserve the group structure we are considering and in particular the torsion points, so an additional argument is needed to obtain results for general $f\in T_n(K)$ with this approach.  
\par

Returning to our previous approach, using Lemma \ref{lemma:rigidstab} we are able to deduce a general result:
\begin{theorem}\label{thm:rigid}
Let $X=\Sp(A)$ denote the rigid space associated to an affinoid $K$-algebra $A:=T_n(K)/I$ for some finite extension $K/\IQ_p$. If $X(\IC_p)$ contains infinitely many torsion points of $\IG_m^n(\IC_p)$, then $X$ contains a positive-dimensional algebraic torus. Moreover, the same conclusion holds if infinitely many torsion points approach $X(\IC_p)$ for a suitable $p$-adic distance. 
\end{theorem}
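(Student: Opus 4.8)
The plan is to transplant the strategy of Theorem~\ref{maintheoremformal} to the multiplicative group and its \emph{full} torsion, peeling off the $p$‑primary part by the inertia argument of Lemma~\ref{Boxallformal} and the prime‑to‑$p$ part by the Frobenius argument of Lemma~\ref{lemma:unramifiedclosure}, and inducting. Since torsion points lying on $X$ are at distance $0$ from $X$, it suffices to prove the stronger ``approaching'' assertion: if for every $\varepsilon>0$ the set $X(\varepsilon):=\{\zeta\in\IG_m^n(\IC_p)\ \text{torsion}: d(\zeta,X)<\varepsilon\}$, with $d(\zeta,X)=\max_{f\in I}|f(\zeta)|_p$, is infinite, then $X$ contains the translate by a torsion point of a positive‑dimensional subtorus. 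I would argue by induction on the pair (number of variables $n$, Krull dimension of $A$), lexicographically ordered. When $\dim A=0$ the claim is vacuous, since the torsion subgroup of $\IG_m^n(\IC_p)$ is discrete — distinct torsion points are at sup‑distance $\ge p^{-1/(p-1)}$ — so $X(\varepsilon)$ is finite once $\varepsilon$ is small. As $A$ is Noetherian, $X$ has finitely many irreducible components; a pigeonhole over a sequence $\varepsilon_k\to 0$ lets us assume $X$ is irreducible, and translating by a torsion point (which preserves all the $X(\varepsilon)$, up to replacing $X$ by a translate) we may assume $X$ passes through the identity.

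If $\Stab(X)(\overline{\IQ})$ is infinite, Lemma~\ref{lemma:rigidstab} directly produces a positive‑dimensional subtorus inside $X$. Otherwise $\Stab(X)$ is finite; fix $r\ge 2$ with $\Stab(X)(\IC_p)\cap\mu_{p^\infty}^n\subseteq\mu_{p^{r-1}}^n$. Applying Lemma~\ref{Boxallformal} to the $p$‑divisible formal group $\widehat{\IG}_m^n$ — after harmlessly enlarging $K$ by the (still finite) extension generated by the prime‑to‑$p$ part of a given torsion point, so that the Galois element produced fixes that part — together with the affinoid analogue of Lemma~\ref{translateclose} (proved verbatim) and the identity $d(\zeta,X\cap T_QX)=\max\bigl(d(\zeta,X),d(\zeta,T_QX)\bigr)$, one obtains for every $\varepsilon$
\[
X(\varepsilon)\ \subseteq\ \{\zeta:\zeta^{(p)}\in\mu_{p^r}^n\}\ \cup\ \bigcup_{Q\in\mu_{p^r}^n\setminus\mu_{p^{r-1}}^n}(X\cap T_QX)(\varepsilon),
\]
where $\zeta^{(p)}$ denotes the $p$‑primary part of $\zeta$. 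The union is finite; each $Q\notin\Stab(X)$, so each $X\cap T_QX$ is a proper closed subvariety of the irreducible $X$, of strictly smaller dimension and defined over a finite extension of $K$, and after another pigeonhole (a single $Q$ works along a sequence $\varepsilon_k\to0$) the induction hypothesis applies to it. The leftover torsion points have $\zeta^{(p)}\in\mu_{p^r}^n$, a finite set of values $\eta$; grouping by $\eta$ and translating by $\eta$ reduces us to the case where infinitely many \emph{unramified} (prime‑to‑$p$) torsion points lie within every $\varepsilon$ of a translate $X'$ of $X$.

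In this unramified case the computation of Lemma~\ref{lemma:unramifiedclosure} shows that, for a lift $\tau$ of Frobenius, $\tau(\zeta)=\zeta^{p}$ on unramified torsion and $d(\cdot,X')$ is $\tau$‑invariant, so $\{\zeta\ \text{unramified torsion}:d(\zeta,X')<\varepsilon_0\}$ is an infinite set stable under raising to the $p$‑th power. Its Zariski closure in $\IG_m^n$ is, by Lang's multiplicative Manin--Mumford theorem over $\overline{\IQ}$ (\cite{MR0190146}), a finite union of torsion translates of subtori; this closure decreases as $\varepsilon_0\to0$ but takes only finitely many values, so a pigeonhole produces a single positive‑dimensional torsion translate $\zeta H$ with $\zeta H\cap X'(\varepsilon)$ infinite for every $\varepsilon$. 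One then wants $\zeta H\subseteq X'$: pulling the equations of $X'$ back along a monomial parametrization $\IG_m^m\xrightarrow{\sim}\zeta H$ (which sends integral points to integral points) yields an affinoid $\widetilde X\subseteq\IG_m^m$ with $\widetilde X(\varepsilon)$ infinite for all $\varepsilon$; if $m<n$ the induction in fewer variables gives a subtorus inside $\widetilde X$, hence inside $X'$ and finally inside $X$, while $m=n$ forces $X'$, hence $X$, to be the whole polydisk. I expect the transfer in this last step — from ``infinitely many torsion points approach $X'$'' to ``$\zeta H$ actually lies in $X'$'' — to be the crux, being essentially the multiplicative Tate--Voloch statement: for the weaker assertion in which the torsion points genuinely lie on $X$ it is immediate (Lemma~\ref{lemma:unramifiedclosure} followed by Lemma~\ref{lemma:stable}), and in the approaching regime it is handled by the nested induction just sketched, or alternatively by adapting Neira's distinguished‑variable computation (\cite{MR2705372}, cf.\ Lemmas~\ref{lemma:relation}--\ref{lemma:Lambda}) after a Weierstrass change of coordinates, the subtlety there being that such coordinate changes need not preserve the group structure and so must be tracked through the parametrization.
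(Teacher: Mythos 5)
Your overall architecture matches the paper's: the stabilizer dichotomy (Lemma \ref{lemma:rigidstab} for the infinite case), the inertia argument of Lemma \ref{Boxallformal} plus the translate estimate to peel off wildly ramified $p$-power torsion via the lower-dimensional $X\cap T_QX$, and the Frobenius stability of Lemma \ref{lemma:unramifiedclosure} combined with Lemma \ref{lemma:stable} to finish once unramified torsion is dense \emph{on} $X$. For the first assertion of the theorem (torsion points actually lying on $X$) your argument is essentially the paper's and is fine.

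The gap is in the ``approaching'' assertion, and it sits exactly where you suspect. After reducing to infinitely many unramified torsion points within every $\varepsilon$ of $X'$, the paper does not argue further: it invokes Neira's unramified Tate--Voloch theorem (\cite[Theorem 2.1]{MR2705372}), which says that for unramified torsion there is a uniform $\varepsilon_0>0$ with $\zeta\in X(\varepsilon_0)\Rightarrow\zeta\in X$; this converts ``approach'' into ``lie on'' and hands the problem to Lemmas \ref{lemma:unramifiedclosure} and \ref{lemma:stable}. Your substitute — Lang over $\overline{\IQ}$ to extract a torsion translate $\zeta H$ whose torsion approaches $X'$, then pulling back along a monomial parametrization and inducting on the number of variables — does not close. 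First, the pullback of $T_n(K)/I$ along a parametrization of an $m$-dimensional subtorus is a Laurent--Tate (unit-torus) affinoid $K\langle t_1^{\pm1},\dots,t_m^{\pm1}\rangle/\widetilde I$, since the cocharacters of $H$ generally have negative exponents; as a quotient of a Tate algebra this needs $2m$ variables, so your lexicographic induction on $(n,\dim A)$ does not visibly decrease and need not terminate. Second, and more fundamentally, the terminal case $m=n$ of your induction is precisely the statement that a Zariski-dense set of unramified torsion points approaching $X'$ at every scale forces containment — i.e., the unramified Tate--Voloch statement itself; density alone does not rule out a nonzero $f\in I$ with $\inf_\zeta|f(\zeta)|_p=0$ over unramified torsion. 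So the ``nested induction just sketched'' does not in fact handle the crux you identify; you must either import \cite[Theorem 2.1]{MR2705372} (as the paper does) or carry out Neira's distinguished-variable argument in full, tracking the coordinate changes against the group structure.
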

\begin{proof}
We proceed again by induction on $n$, the case when $n=1$ following by Weierstrass preparation. Assume the stabilizer of $X$ has infinitely many algebraic points. Then the result holds by Lemma \ref{lemma:rigidstab}. \\
We may therefore suppose the stabilizer of $X$ has finitely many algebraic points. As in the formal case, the action of inertia via Lemma \ref{Boxallformal} implies that given a torsion point $\zeta=\zeta'\cdot\zeta_p$, with $\zeta_p$ denoting the projection to $p$-power torsion of $\zeta$ and $\zeta_p\not\in \IG_m^n(K(\mu_{p^r}))$ for $r\geq 1$, there is $\sigma\in \Gal(\overline{K}/K^{ur})$ such that 
$$\sigma(\zeta)/\zeta=\sigma(\zeta_p)/\zeta_p\in \mu_{p^r}^n\setminus \mu_{p^{r-1}}^n.$$
The analogue of Lemma \ref{translateclose} in this setting now implies that the torsion $\varepsilon$-close to $X$ satisfies
$$X(\varepsilon)\subseteq \{\zeta\vert \zeta_p\in \mu_{p^{r-1}}^n\}\cup\bigcup_{Q\in \mu_{p^{r}}^n\setminus \mu_{p^{r-1}}^n} (X\cap T_{Q}X)(\varepsilon).$$
Let $r$ be chosen large enough so that for any $\zeta=\zeta'\cdot \zeta_p\in\Stab(X)$ we have that $\zeta_p\in \mu_{p^{r-1}}$. Assuming $X$ is irreducible, we obtain that $X\cap T_{Q}X$ is lower-dimensional for $Q\in \mu_{p^{r}}^n\setminus \mu_{p^{r-1}}^n$ and handle this case by induction. We may therefore assume the $p$-power part of the torsion in $X(\varepsilon)$ is finite.
After working with coefficients over a finite ramified extension of $K$ it therefore suffices to consider infinite unramified torsion in $X(\varepsilon)$. For unramified torsion, by \cite[Theorem 2.1.]{MR2705372} it is known that for $\varepsilon>0$ small enough $\zeta\in X(\varepsilon)$ implies $\zeta\in X$. Finally, without loss of generality we may assume that the unramified torsion points are dense in $X$ for $\varepsilon>0$ small enough. The result now follows from Lemmas \ref{lemma:unramifiedclosure} and \ref{lemma:stable}.

\end{proof}

In particular this implies Proposition \ref{prop:rigidLang} when $n=2$. Moreover we obtain for general $n$ a rigid analytic version of the multiplicative Manin--Mumford result as well as of the Tate--Voloch Conjecture for one-dimensional affinoid spaces. Similarly to the formal setting, we suspect this result can be strengthened to show that if the closure of the torsion points on an affinoid $X$ is $d$-dimensional, $X$ contains a $d$-dimensional algebraic torus. 
\section{Acknowledgements}
We would like to thank Laurent Berger for helpful conversations on $p$-adic dynamical systems. We would also like to thank Sean Howe for his availability to discuss $p$-divisible groups and for his valuable feedback. Finally, we thank Vincent Pilloni, Jacob Tsimerman and Felipe Voloch for beneficial discussions on topics relevant to this paper.

\nocite{}
\bibliography{rigidbiblio.bib}

\end{document}